\newcommand{\phicir}[1]{\draw[<->,dashed] (#11)  to [out=30,in=150] node[above,draw=none]{$\phi$} (#12);}
\newcommand{\Fi}{\mathit{Fi}_{22}}
\newcommand{\schl}{Schl\"{a}fli symbol}
\newcommand{\orst}{O_8^+(2){:}S_3}
\newcommand{\indinfi}{2^9{\cdot}3{\cdot} 5^2{\cdot} 7}
\newcommand{\olp}{$2^2{\cdot}3^2{\cdot}5{\cdot}13$}
\DeclareMathOperator{\lcm}{lcm}
\theoremstyle{definition}
\newtheorem{theorem}           {Theorem}
\newtheorem{prop}[theorem]    {Proposition}
\newtheorem{coro+}[theorem]            {Corollary}
\newtheorem{lemma}[theorem]            {Lemma}
\newtheorem{defi+}           {Definition}
\newtheorem{problem}         {Problem}
\newtheorem{remark}         {Remark}
\date{ \today}
\begin{document}
\title{Locally toroidal polytopes of rank 6 and sporadic groups}
\author[D.V.~Pasechnik]{Dmitrii V. Pasechnik}
\address{Department of Computer Science and Pembroke College, University of Oxford}
\email{dimpase@cs.ox.ac.uk}

\begin{abstract}
We augment the list of finite universal
locally toroidal regular polytopes of type $\{3,3,4,3,3\}$ due to P.~McMullen and E.~Schulte,
adding as well as removing entries. This disproves a related long-standing conjecture.
Our new universal polytope is related to a well-known
$Y$-shaped presentation for the sporadic simple group $\Fi$, and admits $S_{4}\times\orst$ as the
automorphism group.
We also discuss further extensions of its quotients in the context of $Y$-shaped presentations.
As well, we note that two known examples of finite universal polytopes of type
$\{3,3,4,3,3\}$ are related to $Y$-shaped presentations of orthogonal groups over $\mathbb{F}_2$.
Mixing construction is used in a number of places to describe covers and 2-covers.
\end{abstract}

\maketitle

\section{Introduction and results}
Presentations of finite sporadic simple groups as quotients of Coxeter groups
with diagram $Y_{\alpha\beta\gamma}$, cf. Fig. \ref{fig:Yabc}(a),
were discovered 30 years ago \cite{Atl,CNS88,Soi:Y555}
and remain a subject of considerable interest, cf. e.g.
\cite{Soi:Y555more, CoPr:92, Iv:Y:99, MR2301231, MR2543618, 2014arXiv1403.2401A}.
\begin{figure}[h]
\centering
\begin{tikzpicture}[every node/.style={circle,draw,inner sep=2pt}]
\node [draw] (a)  at (4,5)          {};
\node [draw] (b3) at (4,4)          {} edge (a);
\node [draw] (c3) at (4,3) {} edge[dotted] (b3);
\node [draw] (b2) at (4.866025,5.5) {} edge   (a);
\node [draw] (c2) at (5.73205,6)     {} edge[dotted]   (b2);
\node [draw] (b1) at (3.133974,5.5)  {} edge (a);
\node [draw] (c1) at (2.267949,6)    {} edge[dotted] (b1);
\draw [decorate,decoration={brace,raise=3pt,transform={scale=1.5,xshift=-4pt}}] (c2)--(b2)
     node[below,midway,draw=none]{\footnotesize $\ \ \beta\geq 0$};
\draw [decorate,decoration={brace,raise=3pt,transform={scale=1.5,xshift=-4pt}}] (b1)--(c1)
     node[below,midway,draw=none]{\footnotesize $\alpha\geq 0\ \ $};
\draw [decorate,decoration={brace,raise=3pt,transform={scale=1.5,xshift=-4pt}}] (c3)--(b3)
     node[left,midway,draw=none]{\footnotesize $\gamma\geq 0\mbox{ nodes }\ \ $};
\node [draw=none] (Y) at (4,5.5) [label=$Y_{\alpha\beta\gamma}$] {};
\node [draw=none] (subf) at (2,2.5) [label=(a)] {};
\end{tikzpicture}\qquad
\begin{tikzpicture}[every node/.style={circle,draw,inner sep=2pt}]
\node [draw] (a)  at (4,5)          [label=320:$a$] {};
\node [draw] (b3) at (4,4)          [label=0:$b_3$] {} edge (a);
\node [draw] (c3) at (4,3)          [label=0:$c_3$] {} edge (b3);
\node [draw] (b2) at (4.866025,5.5) [label=$b_2$] {} edge   (a);
\node [draw] (c2) at (5.73205,6)    [label=$c_2$] {} edge   (b2);
\node [draw] (d2) at (6.59808,6.5)    [label=$d_2$] {} edge (c2);
\node [draw] (b1) at (3.133974,5.5)  [label=$b_1$] {} edge (a);
\node [draw] (c1) at (2.267949,6)    [label=$c_1$] {} edge (b1);
\node [draw] (d1) at (1.401924,6.5)    [label=$d_1$] {} edge (c1);
\phicir{b}
\phicir{c}
\phicir{d}
\node [draw=none] (subf) at (2,2.5) [label=(b)] {};
\end{tikzpicture}
\caption{$Y$-diagrams; the general case and $Y_{332}$.\label{fig:Yabc}}
\end{figure}
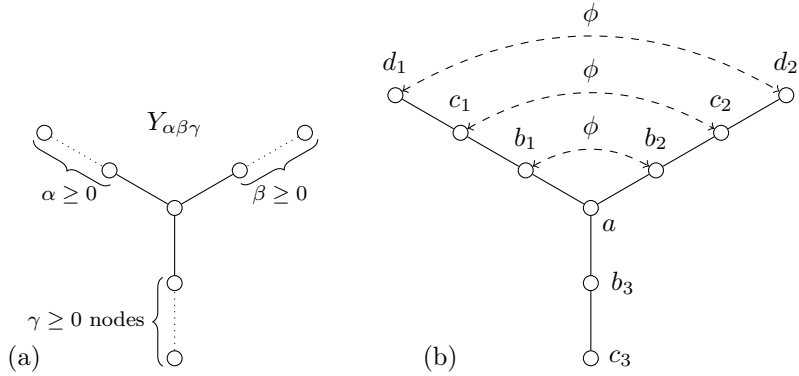
For instance, the sporadic simple group $\Fi$ is a quotient of $Y_{332}$,
cf. Fig.~\ref{fig:Yabc}(b) modulo relations 
\eqref{eq:firel}  below, cf. \cite[p.233]{Atl}.
As explained in Section~\ref{sec:Y} below,
these relations afford the automorphism $\phi$ of the diagram swapping $b_1$,
$c_1$, and $d_1$ with, respectively $b_2$, $c_2$, and $d_2$. Thus
$\Fi$ has a subgroup isomorphic to a quotient of the Coxeter group
with \schl\ $[3,3,4,3,3]$, i.e. with the string diagram we will also denote
by $[3^2,4,3^2]$, cf.  Fig.~\ref{fig:Fabc}.
\begin{figure}[h]
\begin{tikzpicture}[every node/.style={circle,draw,inner sep=2pt}]
\node [draw] (d) {};
\node [draw] (c) [right of=d] {} edge[dotted] (d);
\node [draw] (b) [right of=c] {} edge (c);
\node [draw] (a) [right of=b] {} edge (b);
\node [draw] (u) [right of=a] {} edge[double,thick]
(a);
\node [draw] (s) [right of=u] {} edge (u);
\node [draw] (v) [right of=s] {} edge (s);
\node [draw] (w) [right of=v] {} edge[dotted] (v);
\draw [decorate,decoration={brace,raise=3pt,transform={scale=1.5,xshift=-4pt}}] (c)--(d)
     node[below,midway,draw=none]{\footnotesize $\nu\geq 0$};
\draw [decorate,decoration={brace,raise=3pt,transform={scale=1.5,xshift=-4pt}}] (w)--(v)
     node[below,midway,draw=none]{\footnotesize $\mu\geq 0$};
\end{tikzpicture}
\caption{$[3^{1+\nu},4,3^{1+\mu}]$-diagram. \label{fig:Fabc}}
\end{figure}
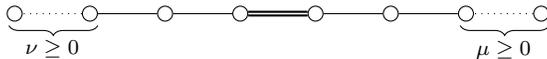

Groups of this kind arise in the study of \emph{abstract regular polytopes}, for which
the book \cite{MS02} by P.~McMullen and E.~Schulte is a definite reference. These objects
may be viewed as quotients (satisfying \emph{intersection property} \eqref{eq:ip}, cf. \cite{MS02})
of Coxeter complexes of Coxeter groups with $r$-node string diagram;
here $r$ is called the \emph{rank}. Abstract regular polytopes are classified
according to topology of their vertex figures and facets; they are called \emph{locally $X$} if
the latter have topology of $X$.
An up-to-date review on group-theoretic approaches to the
problem of classification of abstract regular polytopes may be found e.g. in \cite{MR3427645}.

Following a problem posed by B.~Gr\"{u}nbaum~\cite[p.196]{Gr78}, particular
attention has been paid to finite {\em locally toroidal} abstract regular
polytopes, in this case the rank is bounded from above by $6$, cf. \cite[Lemma~10A.1]{MS02}.

One of the three rank 6 cases is the
case corresponding to the group $[3^2,4,3^2]$, cf. \cite[Sect. 12D]{MS02} (or \cite[Sect. 7]{MS96}),
where a conjecturally complete list of the finite universal examples is given, see
\cite[Table 12D1]{MS02} (which is already in \cite[Table~V]{MS96}) and \cite[Problem~18]{SW06}.
The main results of the present paper give one more example, missing in that table, and
remove erroneous infinite series of examples, of which only first terms
actually exist.

In more detail, the facets $\{3,3,4,3\}_\mathbf{s}$ and vertex figures $\{3,4,3,3\}_\mathbf{t}$
here correspond to nontrivial finite quotients
$[3,3,4,3]_\mathbf{s}$ and $[3,4,3,3]_\mathbf{t}$
of the affine Coxeter group $\tilde{F}_4$, i.e. the groups $[3,3,4,3]$ and $[3,4,3,3]$, with normal
Abelian subgroups either of the form $q^4$ or $q^2\times (2q)^2$, with $q\geq 2$,
where we follow notation from \cite{Atl}
to denote the direct product $(\mathbb{Z}/s\mathbb{Z})^k$ of $k$ copies of the cyclic group of
order $s$ by $s^k$. The case $q^4$ is denoted in \cite{MS96,MS02} by $\{3,3,4,3\}_{(q,0,0,0)}$,
and the case $q^2\times (2q)^2$ by $\{3,3,4,3\}_{(q,q,0,0)}$ (and completely
similarly for $\{3,4,3,3\}$).

\begin{theorem}\label{th:mainmain}
Let $\Gamma$ be a universal locally toroid rank $6$ abstract regular polytope
with vertex figures of type $\{3,4,3,3\}_{(3000)}$ and
facets of type $\{3,3,4,3\}_{(2200)}$.
Then $\Gamma$ is the 24-fold cover of $\Gamma_1$, where $\Gamma_1$ has
$v=11200=2^6{\cdot} 5^2{\cdot} 7$
vertices and $f=14175=3^4{\cdot} 5^2{\cdot} 7$ facets, and the group $\Omega:=\orst$, of
order $g=2^{13}{\cdot} 3^6{\cdot} 5^2{\cdot} 7$.
The group of $\Gamma$ is isomorphic to $S_4\times \Omega$.
\end{theorem}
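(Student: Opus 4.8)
The plan is to reduce Theorem~\ref{th:mainmain} to a finite computation with a finitely presented group and then carry that computation out. Write $W=[3,3,4,3,3]=\langle\rho_0,\dots,\rho_5\rangle$ for the string Coxeter group with diagram $[3^2,4,3^2]$, let $R_1$ be the extra relations presenting the facet toroid $[3,3,4,3]_{(2200)}$ on $\langle\rho_0,\dots,\rho_4\rangle$, let $R_2$ be the extra relations presenting the vertex-figure toroid $[3,4,3,3]_{(3000)}$ on $\langle\rho_1,\dots,\rho_5\rangle$, and set $G:=W/\langle\langle R_1,R_2\rangle\rangle$. By the amalgamation theory for abstract regular polytopes (\cite{MS02}, cf. also \cite{MS96}), a universal $\Gamma$ of the stated type exists precisely when $G$, with its distinguished generators, is a string C-group whose facet and vertex figure are \emph{exactly} the prescribed toroids, and then $\mathrm{Aut}(\Gamma)\cong G$. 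So it suffices to prove: (i) $G\cong S_4\times\Omega$ with $\Omega=\orst$; (ii) $G$, with the $\rho_i$, satisfies the intersection property and has facet $\{3,3,4,3\}_{(2200)}$ and vertex figure $\{3,4,3,3\}_{(3000)}$; and (iii) $\Gamma_1$ is the polytope obtained by killing the direct factor $S_4$, covered $24$-to-$1$ by $\Gamma$.

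First I would produce $\Gamma_1$ concretely. Using the $Y$-diagram material of Section~\ref{sec:Y} — in particular the way $[3^2,4,3^2]$ sits inside the $Y_{332}$-presentation of $\Fi$ — one exhibits, inside a faithful representation of $\orst$ (say on the $O_8^+(2)$-geometry over $\mathbb{F}_2$), six involutions satisfying the Coxeter relations of $[3^2,4,3^2]$ together with both families $R_1,R_2$, generating $\Omega$, and with $\langle\rho_0,\dots,\rho_4\rangle\cong[3,3,4,3]_{(2200)}$ and $\langle\rho_1,\dots,\rho_5\rangle\cong[3,4,3,3]_{(3000)}$; one then checks the intersection property for these generators. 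This yields the regular polytope $\Gamma_1$ with group $\Omega$ of order $g=2^{13}\cdot3^6\cdot5^2\cdot7$, and counting flags gives $v=g/|[3,4,3,3]_{(3000)}|=g/(2^7\cdot3^6)=11200$ and $f=g/|[3,3,4,3]_{(2200)}|=g/(2^{13}\cdot3^2)=14175$.

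Next I would build the $24$-fold cover. Since the facet parameter $2$ and the vertex-figure parameter $3$ are coprime, $\{3,4,3,3\}_{(3000)}$ cannot be the vertex figure of a polytope with group $\Omega$ in the ``minimal'' way, and a suitable mixing construction supplies a proper cover: take the mix $\Gamma:=\Gamma_1\diamond\mathcal R$ with $\mathcal R$ a small regular polytope of type $\{3,3,4,3,3\}$ (its group pinned down by a short coset enumeration), chosen so that the subdirect product $\mathrm{Aut}(\Gamma)\le\Omega\times\mathrm{Aut}(\mathcal R)$ is $S_4\times\Omega$; here one verifies that the common quotients of the two factors are incompatible with the generator constraints, so by Goursat the mix is the full product. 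The mixing formulas for sections then show that the facet and vertex figure of $\Gamma$ are the full toroids $\{3,3,4,3\}_{(2200)}$ and $\{3,4,3,3\}_{(3000)}$, and that $\Gamma$ is a genuine $24$-fold cover of $\Gamma_1$, with quotienting $\Gamma$ by the direct factor $S_4$ recovering $\Gamma_1$.

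The remaining — and main — task is universality: $G\cong S_4\times\Omega$, with nothing larger. The previous two steps give a surjection $G\to S_4\times\Omega$. For the converse one runs a coset enumeration of $\langle\rho_0,\dots,\rho_5\mid \mathrm{Cox}[3^2,4,3^2],\,R_1,\,R_2\rangle$ over a well-chosen subgroup — the vertex-figure subgroup $\langle\rho_1,\dots,\rho_5\rangle$ being the natural choice — and checks that the index equals $24\cdot v=24\cdot 11200$, so $|G|=24\cdot 2^{13}\cdot3^6\cdot5^2\cdot7$ and hence $G\cong S_4\times\Omega$. The intersection property for $G$, and that its facet and vertex figure are exactly the two toroids, then follow from the corresponding facts established for $\Gamma$, so $G$ realizes the amalgam and $\Gamma$ is the universal polytope; the assertions about $\Gamma_1$ and its group $\Omega$ drop out. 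The principal obstacle is this coset enumeration: it must be coaxed to terminate with exactly the predicted index, which may need a judicious subgroup and some auxiliary consequences of the toroidal relations; a secondary difficulty is that the intersection-property checks for $\Omega$ and for $S_4\times\Omega$ are, in practice, permutation-group computations on the (at most $24\cdot 11200$) cosets rather than hand arguments.
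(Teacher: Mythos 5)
Your overall architecture is exactly the paper's: an explicit construction of $\Gamma_1$ with group $\orst$ (the paper does this twice, via the $Y_{332}$-presentation of $\Fi$ in Section~\ref{sec:Y} and via explicit $24\times24$ matrices over $\mathbb{F}_2$ in Lemma~\ref{lem:embeddingO}), a mix producing the $24$-fold cover with group $S_4\times\Omega$, and a coset enumeration over the vertex stabiliser returning index $268800=24\cdot 11200$ to pin down the universal group. The one genuine misstep is your description of the second mix factor $\mathcal R$ as ``a small regular polytope of type $\{3,3,4,3,3\}$'' with the mix equal to the full product $\Omega\times\mathrm{Aut}(\mathcal R)\cong S_4\times\Omega$: that forces $\mathrm{Aut}(\mathcal R)\cong S_4$, and no rank-$6$ string C-group has order $24<2^6$, so no such polytope exists (nor could its vertex figure mix with $\{3,4,3,3\}_{(3000)}$ to return type $(3000)$). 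What the paper actually mixes with is the \emph{degenerate} quotient $S_4$ of $[3^2,4,3^2]$ given by $\psi(a)=(1,2)$, $\psi(b)=(2,3)$, $\psi(c)=(3,4)$, $\psi(d)=\psi(e)=\psi(f)=1$ (Lemma~\ref{lem:coversO}); Proposition~\ref{prop:prod} is deliberately stated for arbitrary quotients of the Coxeter group to permit this, and since the $S_4$-factor is not a polytope the intersection property of the mix cannot be read off from ``mixing formulas for sections'' but is supplied by the parabolic-maximality argument of Lemma~\ref{lem:ip}. Your Goursat argument for surjectivity of the mix onto the full direct product is a fine substitute for the paper's computation with elements of order $5$; with $\mathcal R$ replaced by the degenerate $S_4$-quotient, your proof goes through and is essentially the published one.
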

Here $\Gamma=\Gamma_4$ and $\Gamma_1$ are the biggest and the smallest
member of the sequence of covers $\Gamma_k$, with
$1\leq k\leq 4$. Namely, $\Gamma_k$ is a 
$k!$-fold cover of $\Gamma_1$
and the group of $\Gamma_k$ is isomorphic to $S_k\times \Omega$.
More details on this are given in Section~\ref{sec:Y}. 

A number of comments on Theorem~\ref{th:mainmain} are in order.
The claim of finiteness of $\Gamma$
depends upon coset enumeration, that was, for robustness purposes,
carried out using two different implementations of the
Todd-Coxeter algorithm: the built-in implementation of {\sf GAP} system \cite{GAP4}, and
{\sf ACE} implementation by G.~Havas and C.~Ramsay \cite{ace}, also available as a {\sf GAP}
package \cite{acegap}.
Both computations in the case $\mathbf{t}={(3000)}$, $\mathbf{s}=(2200)$
returned the index of the vertex stabiliser, the subgroup $F:=3^4{:}F_4$
in the quotient $\Theta$ of the Coxeter group $[3^2,4,3^2]$ modulo
the relations on Fig.~\ref{fig:eeF4}
equal to $268800=\indinfi$. We explain below that on the other hand
the identification of the group of
$\Gamma_1$ does not need coset enumeration, and can be carried out by hand or
with any software capable of multiplying $24\times 24$ matrices over $\mathbb{F}_2$, the
field of 2 elements. It is worth mentioning that both $F$ and the facet stabiliser
$(2^2.4^2){:}F_4$
are maximal subgroups in $\Omega$, although only $F$ remains maximal upon
restricting to the simple subgroup $O_8^+(2)$ of $\Omega$.

In Section~\ref{sec:Y} we sketch how to establish the existence
of the group of $\Gamma_1$ as a subgroup in $\Fi$.
Apart from this, we discuss there other connections to the $Y$-shaped presentations already
mentioned, and pose a number of related open problems.
In Section~\ref{sec:gamma} we give another proof of
existence of $\Gamma_1$ and its covers $\Gamma_k$ for $2\leq k\leq 4$ by a computer-free argument.
The latter is an example of an application of Proposition~\ref{prop:prod} below.

\medskip

$\Fi$ is not the only quotient of the Coxeter group $Y_{332}$ related to polytopes
of type $\{3,3,4,3,3\}$. Namely, one also finds groups for such polytopes within the 2-modular
quotient $G$ of $Y_{332}$, which is isomorphic to $2^8{:}O_8^-(2){:}2$, as established by R.~Griess
in \cite{MR704292}, where a general study of $p$-modular quotients of $Y_{\alpha\beta\gamma}$
is carried out; in case $p=2$ one speaks of subgroups of
$\mathit{GL}_{1+\alpha+\beta+\gamma}(2)$ generated by \emph{transvections}---images of the
generators of $Y_{\alpha\beta\gamma}$.
One such polytope is described in the following result, and the other one is
its 2-quotient. The group $G$ also appears on \cite[pp.232-233]{Atl},
as a quotient of $X_{332}$, a re-labelling of $Y_{332}$.
See Section~\ref{sec:2000} for details.

\begin{theorem}\label{th:maintwo}
Let $\Gamma$ be a locally toroid rank $6$ abstract regular polytope
with vertex figures of type $\{3,4,3,3\}$ and
facets of type $\{3,3,4,3\}_{(2000)}$.
Then its vertex figures are either of type $\{3,4,3,3\}_{(2200)}$,
and $\Gamma$ has $128$ facets, $32$ vertices, and the group of order $2^{18}{\cdot} 3^2$,
or its vertex figures are of type $\{3,4,3,3\}_{(2000)}$, and 
$\Gamma$ has $32$ facets, $32$ vertices, and the group of order $2^{15}{\cdot} 3^2$.
\end{theorem}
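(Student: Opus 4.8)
The plan is to recast the statement as a problem about quotients of the string Coxeter group $[3,3,4,3,3]=[3^2,4,3^2]$, and then to locate the relevant group inside the $2$-modular quotient $G=2^8{:}O_8^-(2){:}2$ of $Y_{332}$. First I would fix standard generators $\rho_0,\dots,\rho_5$ of $[3^2,4,3^2]$, so that the facet subgroup is $\langle\rho_0,\dots,\rho_4\rangle$ and the vertex-figure subgroup is $\langle\rho_1,\dots,\rho_5\rangle$, and adjoin the single extra relator $\tau$ that presents $\langle\rho_0,\dots,\rho_4\rangle$ as the toroidal group $[3,3,4,3]_{(2,0,0,0)}$ (cf.~\cite{MS02}). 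Write $\Lambda$ for the resulting group and $\mathcal U$ for its coset geometry. The first task is to check that $\mathcal U$ is a regular polytope --- that $\tau$ does not collapse the facet, so $\langle\rho_0,\dots,\rho_4\rangle\cong[3,3,4,3]_{(2,0,0,0)}$ in $\Lambda$, and that the intersection property holds --- whence $\mathcal U$ is the \emph{universal} regular polytope of type $\{3,3,4,3,3\}$ with facet $\{3,3,4,3\}_{(2000)}$, and every $\Gamma$ as in the statement is a quotient of $\mathcal U$. It then remains to determine $\Lambda$ exactly and to enumerate those of its quotients that stay polytopes with the prescribed facet.

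To pin down $\Lambda$ I would use the diagram automorphism $\phi$ of Section~\ref{sec:Y}: unfolding $\rho_1,\dots,\rho_5$ along the three arms realizes $\Lambda$ as (the $\phi$-fixed part of) a quotient of $Y_{332}$, and the crux is that, modulo the Coxeter relations, $\tau$ becomes equivalent to the relations defining the $2$-modular quotient $G=2^8{:}O_8^-(2){:}2$ of $Y_{332}$ (Griess \cite{MR704292}), under which the $\rho_i$ map to transvections in $\mathit{GL}_9(2)$. Thus $\Lambda$ is the subgroup of $G$ generated by these six transvections; its order, together with the isomorphism types of the facet subgroup and of the vertex-figure subgroup $\langle\rho_1,\dots,\rho_5\rangle$ --- which comes out as $[3,4,3,3]_{(2,2,0,0)}$ --- then follows from an explicit computation with $9\times 9$ matrices over $\mathbb{F}_2$, requiring no coset enumeration, and is confirmed independently by Todd--Coxeter enumeration on the cosets of $\langle\rho_0,\dots,\rho_4\rangle$ and of $\langle\rho_1,\dots,\rho_5\rangle$. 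This yields the first polytope of the statement, with vertex figure $\{3,4,3,3\}_{(2200)}$ and the stated group order, number of facets and number of vertices.

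For the second polytope, and for exhaustiveness of the list, I would analyse the normal subgroups $N$ of $\Lambda$: a proper quotient $\Lambda/N$ is again a polytope of type $\{3,3,4,3,3\}$ with facet $\{3,3,4,3\}_{(2000)}$ exactly when $N$ meets both $\langle\rho_0,\dots,\rho_4\rangle$ and $\langle\rho_1,\dots,\rho_5\rangle$ trivially and the intersection property survives. The normal-subgroup structure of $\Lambda$ is transparent from the embedding $\Lambda\le G=2^8{:}O_8^-(2){:}2$, and a short analysis of the action of $\Lambda$ on the normal $2$-subgroup $2^8$ of $G$ leaves a unique candidate $N$ up to conjugacy, of $2$-power order; in $\Lambda/N$ the vertex-figure subgroup collapses from $[3,4,3,3]_{(2,2,0,0)}$ to $[3,4,3,3]_{(2,0,0,0)}$. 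Checking that $\Lambda/N$ still satisfies the intersection property --- this polytope is flat, which is what relates its group order to its numbers of facets and vertices --- produces precisely the second entry, and one verifies that no other proper quotient qualifies.

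The main obstacle is the identification step of the second paragraph: carrying $\tau$ through the $\phi$-unfolding onto the $2$-modular relations of $Y_{332}$, so that $\langle\rho_0,\dots,\rho_5\rangle\le G$ with the transvections matched up correctly; once that is secure the matrix calculations are routine. If that structural route is felt to be fragile, it can be replaced by a direct Todd--Coxeter enumeration of $\Lambda$ and of $\Lambda/N$, but then the classification of quotients still needs the normal-subgroup analysis above. A secondary difficulty is verifying the intersection property for the flat quotient, where regular polytopes degenerate most readily; I would establish it section by section from the McMullen--Schulte quotient criterion, using that the two toroidal ends and the $\{3,4,3\}$ medial section behave as in $\mathcal U$.
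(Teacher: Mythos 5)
Your overall strategy is the same as the paper's: form the quotient $\Lambda$ of $[3^2,4,3^2]$ by the single relator forcing $\{3,3,4,3\}_{(2000)}$ facets, identify it as the group of the universal polytope, read off that the vertex figure is then forced to be $\{3,4,3,3\}_{(2200)}$, and obtain the second polytope as the quotient by a normal $2$-group (of order $2^3$, killing the vertex figure down to $(2000)$ and ruling out $t>2$). The paper's primary argument is simply Todd--Coxeter enumeration of the cosets of $\langle\rho_0,\dots,\rho_4\rangle$ in $\Lambda$, which terminates with index $128$ and hence gives $|\Lambda|=2^{18}\cdot 3^2$ exactly; the orders of $f\tau\sigma\tau$ and $f\tau\sigma\tau\sigma$ in the resulting permutation representation then identify the vertex figure as $(2200)$. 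The embedding into the $2$-modular quotient $2^8{:}O_8^-(2){:}2$ of $Y_{332}$ via the diagram automorphism $\phi$, which you take as your main route, appears in the paper only as a supplementary existence argument (Section~\ref{subs:emb}), alongside two other ``conceptual'' alternatives: the bound $|K|\le 8$ on the kernel of the action on the $32$ vertices, and a Reidemeister--Schreier computation of the vertex stabiliser.

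The genuine gap is exactly the one you flag as the ``crux'': the claim that the $(2000)$-facet relator, unfolded through $\phi$, is \emph{equivalent} to Griess's defining relation for the $2$-modular quotient of $Y_{332}$. As it stands this is asserted, not proved, and it is the step that does all the work. Without it, the $9\times 9$ matrix computation over $\mathbb{F}_2$ only exhibits a homomorphic image of $\Lambda$, i.e.\ a \emph{lower} bound on $|\Lambda|$; it cannot by itself establish finiteness of $\Lambda$, the exact order $2^{18}\cdot 3^2$, or the exhaustiveness of your list of quotients --- so the phrase ``requiring no coset enumeration'' overstates what that route delivers. (Note also that the paper's Section~\ref{subs:emb} observes that adding the $(2000)$ vertex-figure relator inside the $Y$-group yields only a quotient of the universal $\{\{3,3,4,3\}_{(2000)},\{3,4,3,3\}_{(2000)}\}$ by a subgroup of order $4$, so the $Y$-embedding does not even see the second polytope exactly.) Your stated fallback --- direct Todd--Coxeter on $\Lambda$ --- is precisely the paper's proof, so the proposal is rescuable; but either that enumeration or a genuine proof of the relator equivalence must be supplied, and one of the two must carry the upper bound on $|\Lambda|$. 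A minor further point: normal subgroups are not taken ``up to conjugacy'', and to justify the dichotomy in the statement you should say explicitly why intermediate quotients (by subgroups of order $2$ or $4$ inside the normal $2^3$) do not give further admissible polytopes with the prescribed facet, or else interpret the theorem, as the paper implicitly does, as describing the universal member of each of the two classes determined by the vertex-figure type.
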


Basically, enumeration
of the cosets of a facet stabiliser
in the quotient of the Coxeter group $[3^2,4,3^2]$ by a relation forcing
the facets $\{3,3,4,3\}_{(2000)}$ shows that this group
is finite, of order $2^{18}{\cdot} 3^2$. This \emph{proves}
that the vertex figures are of type $\{3,4,3,3\}_{(2200)}$ (unless
further relations are imposed).
This identifies $\Gamma$ with
the second entry in \cite[Table 12D1]{MS02} (or \cite[Table~V]{MS96}) for $t=2$,
i.e.  $\mathbf{t}=(2200)$.

The quotient by a normal subgroup $2^3$ produces
the other case,  $\mathbf{t}=(2000)$, the first entry in [loc.cit.] for $t=2$.
It shows that the cases with $t>2$ in [loc.cit.] in fact do not arise.
We will sketch alternative computer-free approaches to a proof of this part of the theorem
in Section~\ref{sec:2000}.

No novelty is claimed for the second part of Theorem~\ref{th:maintwo} (e.g.
it is already in \cite{MS96}), it is added for the sake of completeness.

\bigskip

Currently known (up to taking duals) examples of universal finite
polytopes of type $\{3,3,4,3,3\}$ are listed in Table~\ref{tab:known}.
We also give there information on
numbers of vertices $v$, facets $f$, groups, and whether there exists
$\Gamma\in\langle\{3,3,4,3\}_{\mathbf{s}},\{3,4,3,3\}_{\mathbf{t}}\rangle$
with group embedded as a subgroup in a ``$Y$-overgroup``, i.e. in a quotient
of $Y_{332}$ over relations affording $\phi$, as shown on Figure~\ref{fig:Yabc}(b).
The last example in the table is new, the rest already appear in \cite{MS96}.

\begin{table}[h!]
\begin{tabular}{ccrrcc}
\hline
$\mathbf{s}$ & $\mathbf{t}$ & $v$ & $f$ & $G$ & ``$Y$-overgroup''\\
\hline
(2000) & (2000) & $2^5$    & $2^5$       & $[2^{15} 3^2]$ & $O_8^-(2){:}2$\\
(2000) & (2200) & $2^5$    & $2^7$       & $[2^{18} 3^2]$ & $2^8{:}O_8^-(2){:}2$\\
(2200) & (2200) & $2^{11}$ & $2^{11}$    & $[2^{24} 3^2]$ & ---\\
(3000) & (3000) & \olp     &\olp         & $(3^2\times L_4(3)).2^2$ & ---\\
(2200) & (3000) & $\indinfi$ &
    $2^3{\cdot}3^5{\cdot} 5^2{\cdot} 7$ & $S_3\times \orst$ & $\Fi$ \\
\hline\\
\end{tabular}
\caption{The known universal finite polytopes
$\{\{3,3,4,3\}_{\mathbf{s}},\{3,4,3,3\}_{\mathbf{t}}\}$, their groups, and
(whenever they exist) ``$Y$-overgroups``.
\label{tab:known}}
\end{table}

Note that 
Lemma~\ref{lem:more33433} gives a construction of a finite polytope (not known to be
universal at present) with $\mathbf{s}=(6600)$,
$\mathbf{t}=(3000)$ as the {\em mix}, of the last 
two entries in Table~\ref{tab:known}.
The {\em mix} construction from \cite[Sect.~7A]{MS02}
provides a useful technical tool for constructing covers, as well as 2-covers,
of abstract polytopes, and, more generally, quotients of Coxeter complexes.

\begin{prop}\label{prop:prod}
(Mix construction \cite{MS02}). Let $G=\langle g_1,\dots,g_r\rangle$ and $H=\langle h_1,\dots,h_r\rangle$ be two quotients of
an arbitrary Coxeter group $F$ with $r$ generators $\gamma_k$, so that $g_k$ and $h_k$ are images
of $\gamma_k$ under the respective homomorphisms, $1\leq k\leq r$.
Let $G\diamond H:=\langle g_1 h_1, g_2 h_2,\dots g_r h_r\rangle\leq G\times H$. Then $G\diamond H$,
called \emph{mix} of $G$ and $H$, is
a quotient of $F$, with $g_k h_k$ the image of $\gamma_k$, for $1\leq k\leq r$. \qed
\end{prop}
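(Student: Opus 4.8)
The plan is to exhibit $G\diamond H$ directly as a homomorphic image of $F$, so that it is a quotient essentially by definition. First I would fix the standard Coxeter presentation $F=\langle \gamma_1,\dots,\gamma_r\mid (\gamma_i\gamma_j)^{m_{ij}}=1\rangle$, where $m_{ii}=1$ and $m_{ij}=m_{ji}\in\{2,3,\dots,\infty\}$, an entry $\infty$ meaning that the corresponding relator is omitted. Unpacking the hypothesis that $G$ and $H$ are quotients of $F$ with $g_k$, $h_k$ the images of $\gamma_k$: there are surjective homomorphisms $q_G\colon F\to G$ and $q_H\colon F\to H$ carrying $\gamma_k$ to $g_k$ and to $h_k$ respectively, so in particular $(g_ig_j)^{m_{ij}}=1$ holds in $G$ and $(h_ih_j)^{m_{ij}}=1$ holds in $H$ whenever $m_{ij}<\infty$.

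Next I would define a map on generators by $\gamma_k\mapsto (g_k,h_k)\in G\times H$ and check that it respects the defining relators. This is immediate, since a relation in a direct product holds if and only if it holds in each factor:
\[
\bigl((g_i,h_i)(g_j,h_j)\bigr)^{m_{ij}}=\bigl((g_ig_j)^{m_{ij}},\,(h_ih_j)^{m_{ij}}\bigr)=(1_G,1_H).
\]
By von Dyck's theorem (the universal property of a group presentation), the assignment extends uniquely to a homomorphism $\psi\colon F\to G\times H$.

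It then remains to identify $\operatorname{im}\psi$ with $G\diamond H$. Under the usual identification of $G$ and $H$ with the commuting subgroups $G\times\{1\}$ and $\{1\}\times H$ of $G\times H$, one has $g_kh_k=(g_k,1)(1,h_k)=(g_k,h_k)=\psi(\gamma_k)$; hence $\operatorname{im}\psi=\langle (g_1,h_1),\dots,(g_r,h_r)\rangle=\langle g_1h_1,\dots,g_rh_r\rangle=G\diamond H$. Consequently $G\diamond H\cong F/\ker\psi$ is a quotient of $F$, and the composite $F\to G\diamond H$ sends $\gamma_k$ to $g_kh_k$, which is the assertion.

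I do not anticipate a real obstacle; the only subtlety is notational, namely that the product $g_kh_k$ appearing in the statement must be read inside $G\times H$ (equivalently, as the pair $(g_k,h_k)$), and the sole substantive observation is the trivial one that Coxeter relators pull back along $q_G$, $q_H$ and push forward into their direct product. A slightly more structural phrasing would factor $\psi$ as the diagonal embedding $F\to F\times F$ followed by the product map $q_G\times q_H\colon F\times F\to G\times H$, so that $G\diamond H$ is exactly the image of the diagonal; but the von Dyck argument above is the most economical.
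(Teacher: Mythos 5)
Your proof is correct and is the standard argument: the paper itself gives no proof (it cites \cite{MS02} and marks the statement with \qed as immediate), and the von Dyck/universal-property verification you write out is exactly the canonical justification. Nothing is missing; the only remark worth keeping is your own, namely that $g_kh_k$ must be read as the pair $(g_k,h_k)$ inside $G\times H$.
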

Note that in general $G\diamond H$ might fail the intersection property, despite
it holding in $G$ and $H$, cf. \cite[Sect.~7A6]{MS02}.
Thus we will need extra tools to demonstrate the intersection property in $G\diamond H$, in
particular \cite[Lemma~12D4]{MS02}, which is Lemma~\ref{lem:ip} below.

Proposition~\ref{prop:prod} allows to mix two abstract polytopes with the same
diagram $\mathcal{D}$, or, more generally, with the groups affording the Coxeter relations
from $\mathcal{D}$ (in the latter case the actual Coxeter diagram might be a ``quotient'', in the same
sense as Weyl group $A_2\times A_2$ admits the relations of $F_4$).
The corresponding algebraic system is a meet-semilattice, corresponding
to the partially ordered set of certain normal subgroups in the Coxeter group
with diagram $\mathcal{D}$. One application of this is to construct covers of abstract polytopes,
used in Section~\ref{sec:gamma} to analyse the group of $\Gamma$ in Theorem~\ref{th:mainmain}.
For the sake of completeness, in Section~\ref{sec:prod}, cf. Proposition~\ref{prop:L43},
 we show how to use it to identify
the group of the universal polytope $\{\{3,3,4,3\}_{(3000)},\{3,4,3,3\}_{(3000)}\}$ with
$(3^2\times L_4(3)).2^2$, the example studied in a
great detail in \cite[Sect.4.2]{MR2419764}.

Another application of Proposition~\ref{prop:prod} is to construct certain 2-covers of abstract polytopes.
For instance, one constructs a finite abstract polytope of type \\
$\{\{3,3,4,3\}_\mathbf{s},\{3,4,3,3\}_\mathbf{t}\}$
for a new tuple $(\mathbf{s},\mathbf{t})$ of parameters, mixing a pair of examples
from Table~\ref{tab:known}.
More details on this are given in Section~\ref{sec:prod}.

\section{Notation and preliminaries}
Our notation for Coxeter groups and diagrams is standard. A Coxeter group $G$ is generated by
generators $g_1,\dots,g_r$, with the relations $g_i^2=(g_i g_j)^{k_{ij}}=1$,
for $1\leq i<j\leq r$ and $2\leq k_{ij}<\infty$ (more generally, one may assume
$k_{ij}=\infty$, but we will not use this here). Graphically this is drawn as an $r$-node
graph (diagram) $\mathcal{D}(G)$
with $k_{ij}-2$ edges joining vertices $i$ and $j$ (or, more generally, with
labels $k_{ij}$ on the edges for which $k_{ij}\geq 5$, but we will not use this here).
Groups $G$ for which $\mathcal{D}(G)$ is connected are called \emph{irreducible};
it is a necessary (although not sufficient) condition for
the irreducibility of the natural reflection representation of $G$ with generators $g_i$
being reflections in $\mathbb{R}^r$ with respect to hyperplanes $H_i$, with angles
between $H_i$ and $H_j$ determined by $k_{ij}$.
Subgroups $P_I:=\langle g_i\mid i \in I\rangle\leq G$ for $I\subseteq\{1,\dots,r\}$
are called \emph{parabolic} (called \emph{special parabolics} in \cite{MS02}).

For instance, the diagrams $Y_{\alpha,\beta,0}$,
$Y_{\alpha,1,1}$, $Y_{221}$, $Y_{321}$, $Y_{421}$ (cf.
Fig.~\ref{fig:Yabc}) correspond to irreducible finite Coxeter groups
$A_{\alpha+\beta}$, $D_{\alpha+2}$, $E_6$, $E_7$, and $E_8$,
which are also the Weyl groups of the respective root systems so named.
Note that the group $A_n$ is isomorphic to the symmetric group $S_{n+1}$.
If $\mathcal{D}(G)$ is an $r$-path, maybe with multiple edges, one talks about a \emph{string}
diagram, and encodes it as \emph{\schl}
$[k_{12},k_{23},\dots,k_{r-1,r}]$ (assuming the $g_i$ are ordered consecutively
on the path $\mathcal{D}(G))$. We have already mentioned an example of a string diagram $A_n$;
its \schl\ is $[3^{n-1}]$, where we abbreviated $n-1$ consecutive $3$'s
as $3^{n-1}$. Other examples we need here
of irreducible finite Coxeter groups with string diagram are $F_4$, with \schl\ $[3,4,3]$, cf.
Fig. \ref{fig:Fabc}, and $B_n$, with \schl\ $[4,3^{n-2}]$.

Further, we are interested in quotients of $G$ that
satisfy \emph{intersection property}, that is
\begin{equation}\label{eq:ip}
G_I\cap G_J= G_{I\cap J}\qquad\text{for all }I,J\subseteq\{1,\dots,r\}.
\end{equation}
For instance, finite quotients $p^6{:}E_6$, of the affine Coxeter group $\tilde{E}_6$,
i.e. the Coxeter group with diagram $Y_{222}$, and finite quotients $p^4{:}F_4$ of the
affine Coxeter group $\tilde{F}_4$, (the group with \schl\ $[3,3,4,3]$) satisfy the intersection
property whenever $p\geq 2$.

Quotients of $G$ with  string diagram that satisfy \eqref{eq:ip}
are called \emph{string $C$-groups} in \cite{MS96,MS02},
and they correspond to \emph{abstract $r$-polytopes}. The order complexes of these polytopes
are quotients of the
\emph{Coxeter complex} \cite[Chapter~2]{Ti:bldgs}, \cite{Pas:book,BC:book} of $G$,
a simplicial complex built from the cosets
of the parabolic subgroups $G_I\leq G$, $I\subseteq\{1,\dots,r\}$,
with respect to the respective normal subgroups (the latter might be trivial), cf.
\cite[Sect.~2C, 3D]{MS02}.
In particular for finite irreducible $G$
they are classical polytopes: $n$-simplices for $A_n$, $n$-cubes for $B_n$, the \emph{$24$-cell}
$\{3,4,3\}$ for $F_4$, etc. Abstract regular polytopes are also known as
satisfying the intersection property thin diagram geometries
with string diagram, see e.g. \cite{Pas:book,BC:book}.
By $\langle \mathcal{P}_f,\mathcal{P}_v\rangle$ we denote the class of all polytopes
with facets isomorphic to $\mathcal{P}_f$ and vertex figures isomorphic to $\mathcal{P}_v$.
This class contains unique polytope, called the \emph{universal polytope}
$\{\mathcal{P}_f,\mathcal{P}_v\}$, which covers each member of the class, cf. \cite[Thm.4A2]{MS02}.

In the sequel we will need a classification of abstract polytopes of type $\{3,3,4,3\}$
and, dually, $\{3,4,3,3\}$. Due to duality, it suffices to deal with the former, corresponding
to the quotients of the affine Coxeter group $\tilde{F}_4$, see Fig.~\ref{fig:eF4}. (In our notation
for \emph{conjugation}, $x^y=y^{-1}xy$.)
\begin{figure}[h]
\begin{tikzpicture}[every node/.style={circle,draw,inner sep=2pt}]
\node [draw] (c) [label=$a$] {};
\node [draw] (b) [label=$b$,right of=c] {} edge (c);
\node [draw] (a) [label=$c$,right of=b] {} edge (b);
\node [draw] (u) [label=$d$,right of=a] {} edge[double,thick] (a);
\node [draw] (s) [label=$e$, right of=u] {} edge (u);
\end{tikzpicture}\qquad $\sigma:=d^{cb}$, \quad $\tau:=c^{de}$
\caption{Affine Coxeter group $\tilde{F}_{4}$, nodes labelled by generators. \label{fig:eF4}}
\end{figure}
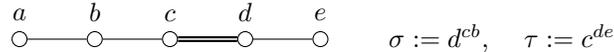

$F_4$ acts on the lattice
$\mathbb{Z}^4\cup ((\frac{1}{2},\frac{1}{2},\frac{1}{2},\frac{1}{2})+\mathbb{Z}^4$
of $\mathbb{R}^4$-vectors with coordinates being integer or half-integer,
cf. e.g. \cite[p.23]{MS96},
and one needs to classify the invariant sublattices of finite index. The latter
may be generated (as a submodule)
by a single vector, of the shape $\mathbf{s}:=(s,0,0,0)$, or $\mathbf{s'}:=(s,s,0,0)$, with
$2\leq s\in\mathbb{Z}$. Combining \cite[Theorems 3.4 and 3.5]{MS96}, one obtains the following.
\begin{theorem}\label{th:rank5}\cite{MS96}
Let $\Pi$ be a finite abstract regular polytope $\{3,3,4,3\}_\mathbf{t}$.
Then for $\mathbf{t}=\mathbf{s}$
its group is isomorphic to $[3,3,4,3]$ subject to the extra relation $(a\sigma\tau\sigma)^s=1$,
with $\tau$ and $\sigma$ as on Fig.~\ref{fig:eF4}, whereas for
$\mathbf{t}=\mathbf{s'}$ the extra relation is $(a\sigma\tau)^{2s}=1$.
\qed
\end{theorem}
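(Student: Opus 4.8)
The natural plan is to pass to the affine picture and classify invariant lattices. First I would use that $[3,3,4,3]$ is the affine Coxeter group $\tilde F_4 = T{:}W$, with $W = W(F_4) = \langle b,c,d,e\rangle$ (the $24$-cell group $F_4$) and $T$ the translation lattice, identified with the $F_4$-lattice $L$ recalled above. The key structural input, part of \cite[Thm.~3.4]{MS96}, is that a finite polytope of type $\{3,3,4,3\}$ has group $\tilde F_4/M$ for a $W$-invariant sublattice $M\le T$ of finite index: since $W$ acts on $T\otimes\mathbb{Q}$ irreducibly and with no nonzero fixed vector, the kernel $N$ meets $T$ in a finite-index subgroup (otherwise a short conjugation argument forces $N=1$), and the intersection property applied to the facet $\{3,3,4\}$ and the vertex figure $\{3,4,3\}$---whose groups $B_4$ and $F_4$ are finite and must embed in the quotient---forces $N$ itself into $T$. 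Conversely each finite-index $W$-invariant $M\le T$ yields such a polytope, as $p^4{:}F_4$ has the intersection property.

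This reduces the theorem to enumerating the finite-index $W$-invariant sublattices $M\le L$. Here I would exploit that $M$ is in particular invariant under the signed-permutation subgroup $B_4\le W$: taking a nonzero $v\in M$ of smallest norm and forming integer combinations of its signed permutations, one is driven to exactly two possibilities---$v$ is $W$-equivalent either to $(s,0,0,0)$ or to $(s,s,0,0)$ for some integer $s\ge 2$---and in each case $M=\langle Wv\rangle$ is singly generated of the stated shape. The dichotomy reflects the two $W$-orbits of roots (short versus long), equivalently the self-duality of the $F_4$-lattice. One then checks that $L/M$ is $q^4$, resp.\ $q^2\times(2q)^2$ with $q=s$, matching the notations $\{3,3,4,3\}_{(q,0,0,0)}$ and $\{3,3,4,3\}_{(q,q,0,0)}$.

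It remains to recognise the two sublattices by a group relation, which is \cite[Thm.~3.5]{MS96}. With $\sigma=d^{cb}$ and $\tau=c^{de}$ as on Fig.~\ref{fig:eF4}, the element $a\sigma\tau\sigma$ is a product of four reflections, one of them ($a$) in an affine hyperplane off the origin, so a suitable power of it is a translation; a direct computation identifies that translation as being by a vector of the $W$-orbit of $(1,0,0,0)$. Hence the normal closure of $(a\sigma\tau\sigma)^s$ in $\tilde F_4$ is $W$-invariant and contains the $W$-orbit of $(s,0,0,0)$, so it equals the sublattice generated by that orbit, and imposing $(a\sigma\tau\sigma)^s=1$ yields exactly $\{3,3,4,3\}_{(s,0,0,0)}$. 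The same argument applied to $a\sigma\tau$---a product of three reflections, so that $(a\sigma\tau)^2$ rather than $a\sigma\tau$ itself is the translation-type motion, whence the exponent $2s$---identifies the vector type $(1,1,0,0)$ and gives $\{3,3,4,3\}_{(s,s,0,0)}$ via $(a\sigma\tau)^{2s}=1$. Combining the lattice enumeration with this identification proves the theorem. I expect the main obstacle to be the first step---ruling out that a string $C$-group quotient of $\tilde F_4$ also collapses part of the linear group $W(F_4)$, which genuinely uses the intersection property and not just normality of the kernel; a secondary technical point, settled by the explicit computation of the third step, is pinning down exactly which lattice vector $a\sigma\tau\sigma$ (resp.\ $(a\sigma\tau)^2$) translates by, so that the normal closure of the single relator is the full lattice $M$ rather than a proper sublattice.
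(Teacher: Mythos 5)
The paper offers no proof of this statement: it is quoted directly from the literature (``Combining [MS96, Theorems~3.4 and 3.5], one obtains the following''), so the only fair comparison is with the argument in \cite{MS96} itself, and your reconstruction follows that argument's route accurately. The three ingredients you identify are exactly the right ones: (i) a finite string-$C$-group quotient of $\tilde F_4=T{:}W$ has kernel contained in the translation lattice $T$, the intersection property being what prevents collapse inside $W$; (ii) the $W$-invariant finite-index sublattices of the $F_4$-lattice fall into the two families generated by the orbit of $(s,0,0,0)$ or of $(s,s,0,0)$, reflecting the two root lengths; (iii) the normal closure of a translation $t_v$ in $\tilde F_4$ consists only of translations and equals the sublattice generated by $Wv$, so a single relator suffices once one computes which vector $a\sigma\tau\sigma$ (a translation by a short-type vector) and $(a\sigma\tau)^2$ (a translation by a long-type vector) move the origin by. You correctly flag the two places where the sketch must be backed by computation --- ruling out collapse of $W$ in step (i) and pinning down the translation vectors in step (iii) --- and with those supplied the argument is complete; I see no gap.
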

In what follows we denote the groups arising in Theorem~\ref{th:rank5}
by $[3,3,4,3]_\mathbf{t}$ (or, if appropriate, by $[3,4,3,3]_\mathbf{t}$).

The mix construction in Proposition~\ref{prop:prod}
needs extra tools to show that intersection property holds in our mixes.
One of these is \cite[Lemma~12D4]{MS02} and some of its corollaries proved
immediately after [loc.cit.] which we state here in our context.
\begin{lemma}\label{lem:ip}
Let $H=[3,3,4,3]_{(s,0,0,0)}$ with $s$ an odd prime. Then the $[3,4,3]$-parabolic is maximal in $H$.
Consequently, let $G$ be a finite quotient of $[3^2,4,3^2]$
such that the following holds for its parabolics: 
$G_{\{0,\dots,4\}}\cong [3,3,4,3]_{(s,0,0,0)}$ and
$G_{\{1,\dots,5\}}\cong [3,4,3,3]_{(t,0,0,0)}$ (or $[3,4,3,3]_{(t,t,0,0)}$),
$t\geq 2$, and so that 
$G_{\{0,\dots,4\}}$ and $G_{\{1,\dots,5\}}$ are proper subgroups of $G$.
Then \eqref{eq:ip} holds in $G$. 
\qed
\end{lemma}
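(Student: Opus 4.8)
The plan is to prove the two assertions in turn: the first, maximality of the $[3,4,3]$-parabolic $P:=\langle g_1,g_2,g_3,g_4\rangle$ in $H=[3,3,4,3]_{(s,0,0,0)}$, is a short module-theoretic argument, and the second then follows from it by the standard intersection-property bookkeeping for rank-$6$ string groups.

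For the first assertion I would use that, by Theorem~\ref{th:rank5} and the structure of the finite quotients of $\tilde F_4$, the group $H$ splits as $T\rtimes P$, where $T\cong s^4$ is the translation subgroup and $P\cong F_4=[3,4,3]$ acts on $T$ as the mod-$s$ reduction of its $4$-dimensional reflection representation. Take any $K$ with $P\leq K\leq H$. Since $T\trianglelefteq H$ and $P\leq K$, Dedekind's modular law gives $K=(K\cap T)P$, so $K\neq P$ forces $K\cap T\neq 1$; and $K\cap T\trianglelefteq K\supseteq P$, so $K\cap T$ is a nonzero $\mathbb{F}_s[F_4]$-submodule of $T$. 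The point is that for $s$ an odd prime this module is irreducible --- the Cartan matrix of $F_4$ being unimodular, the invariant form on $T$ stays nondegenerate, and irreducibility is then a matter of inspecting Brauer characters (it is also part of \cite[Lemma~12D4]{MS02}). Granting this, $K\cap T=T$, hence $K=H$, so $P$ is maximal.

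For the second assertion I would label the six generators of the Coxeter group $[3^2,4,3^2]$ as $g_0,\dots,g_5$, write $A:=G_{\{0,\dots,4\}}$ and $B:=G_{\{1,\dots,5\}}$, and invoke the standard criterion for string groups generated by involutions (cf. \cite{MS02}): if $A$ and $B$ are string $C$-groups and $A\cap B=G_{\{1,\dots,4\}}$, then \eqref{eq:ip} holds in $G$. Here $A\cong[3,3,4,3]_{(s,0,0,0)}$ and $B\cong[3,4,3,3]_{(t,0,0,0)}$ or $[3,4,3,3]_{(t,t,0,0)}$ are the rank-$5$ polytopes of Theorem~\ref{th:rank5}, hence satisfy \eqref{eq:ip} by \cite{MS96}, so it only remains to identify $M:=A\cap B$ with $G_{\{1,\dots,4\}}$. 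Now $M\supseteq G_{\{1,\dots,4\}}$, and under $A\cong[3,3,4,3]_{(s,0,0,0)}$ the subgroup $G_{\{1,\dots,4\}}$ is exactly the $[3,4,3]$-parabolic, hence maximal in $A$ by the first part; so $M=G_{\{1,\dots,4\}}$ or $M=A$. In the latter case $A\subseteq B$, whence $G=\langle g_0,\dots,g_5\rangle=\langle A,B\rangle=B$, contradicting that $B$ is a proper subgroup of $G$. Therefore $M=G_{\{1,\dots,4\}}$, as required.

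The only step carrying real content is the irreducibility of the $4$-dimensional $\mathbb{F}_s[F_4]$-module for odd primes $s$; the rest is the Dedekind-and-generation routine. I expect this to be the main obstacle, and it is exactly what forces the hypothesis ``$s$ odd'': for $s=2$ the $F_4$ reflection module is reducible (which is why the $(q,0,0,0)$ and $(q,q,0,0)$ families must be treated separately and $s=2$ excluded), and then $P$ need not be maximal. In the applications of Lemma~\ref{lem:ip} the relevant value is $s=3$, so the hypothesis is satisfied.
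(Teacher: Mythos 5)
Your proof is correct, and it reconstructs essentially the argument the paper relies on: the paper states this lemma without proof, deferring to \cite[Lemma~12D4]{MS02} and the corollaries following it, which establish exactly your two steps --- maximality of the $[3,4,3]$-parabolic via irreducibility of the mod-$s$ reflection module of $F_4$ for odd $s$, followed by the quotient criterion for string $C$-groups. The only point stated loosely is the irreducibility itself (nondegeneracy of the form alone does not give it; one uses $p\nmid|F_4|$ for $p\geq 5$ and $W(F_4)\cong \mathit{GO}_4^+(3)$ on its natural module for $p=3$), but you correctly flag this as the content of the cited lemma.
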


\section{Sporadic $Y$-presentations and their twists}\label{sec:Y}
Our Theorem \ref{th:mainmain} will follow at once from the following.
\begin{theorem}\label{th:main}
Let $\Gamma$ be a locally toroid rank $6$ abstract regular polytope
with vertex figures of type $\{3,4,3,3\}_{\mathbf{s}}$ and
facets of type $\{3,3,4,3\}_{\mathbf{t}}$.
Let $\mathbf{s}={(3000)}$ and $\mathbf{t}=(2200)$.
Then $\Gamma$ is finite and isomorphic to $\Gamma_k$, for some 
$1\leq k\leq 4$, defined in and after Theorem~\ref{th:mainmain}.
In particular, $\Gamma$ is a quotient of $\Gamma_4$, and
the group of $\Gamma_k$ is isomorphic to $S_k\times \Omega$.
\end{theorem}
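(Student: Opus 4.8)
The goal is to prove Theorem~\ref{th:main}: that any locally toroidal rank-6 regular polytope $\Gamma$ with vertex figures $\{3,4,3,3\}_{(3000)}$ and facets $\{3,3,4,3\}_{(2200)}$ is finite and is one of the $\Gamma_k$, $1\le k\le 4$, with group $S_k\times\Omega$. The natural strategy is to work inside the quotient $\Theta$ of the universal Coxeter group $[3^2,4,3^2]$ obtained by imposing the two defining relations: the relation of Theorem~\ref{th:rank5} forcing the vertex-figure parabolic $G_{\{1,\dots,5\}}$ to collapse onto $[3,4,3,3]_{(3000)}$ (via $(a\sigma\tau\sigma)^3=1$ in the dual labelling), and the relation forcing the facet parabolic $G_{\{0,\dots,4\}}$ onto $[3,3,4,3]_{(2200)}$ (via $(a\sigma\tau)^{4}=1$). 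The universal polytope $\{\{3,3,4,3\}_{(2200)},\{3,4,3,3\}_{(3000)}\}$ — if it exists as a $C$-group — is precisely the one whose group is this quotient, and every $\Gamma$ in the class is one of its quotients by Theorem~4A2 of \cite{MS02}. So the proof reduces to (i) identifying this universal group, (ii) checking the intersection property holds in it, and (iii) enumerating its relevant quotients.

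\textbf{First steps.} First I would set up $\Theta$ explicitly and run the coset enumeration announced after Theorem~\ref{th:mainmain}: the index of the vertex stabiliser $F=3^4{:}F_4$ in $\Theta$ comes out as $\indinfi=268800$, which pins down $|\Theta|=268800\cdot|F|=268800\cdot 3^4\cdot|F_4|$ and, after a structure computation, identifies $\Theta\cong S_4\times\Omega$ with $\Omega=\orst$. Since the paper allows assuming everything stated earlier, and since Theorem~\ref{th:mainmain} asserts this group structure, I would then peel off the $S_4$ direct factor: the composition series of $S_4$ gives the chain of normal subgroups yielding the tower $\Gamma_4\to\Gamma_3\to\Gamma_2\to\Gamma_1$, where $\Gamma_k$ has group $S_k\times\Omega$ (using $S_1=S_2/(\mathbb Z/2)$ etc., i.e.\ quotients by $\mathrm{Alt}$-type normal subgroups of the $S_4$ factor). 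One must check each $\Gamma_k$ is a genuine polytope, i.e.\ satisfies \eqref{eq:ip}; here Lemma~\ref{lem:ip} is not directly applicable because the facet is of type $(2200)$, not $(s,0,0,0)$ — so I would instead verify the intersection property for $\Gamma_1$ directly (this is the ``$24\times 24$ matrices over $\mathbb{F}_2$'' computation mentioned in the introduction, realising the group of $\Gamma_1$ inside $\Fi$), and then argue that a direct product with $S_k$, where $S_k$ contributes a separate string-Coxeter tail structure, preserves \eqref{eq:ip} via the mix/direct-product machinery of Proposition~\ref{prop:prod} together with \cite[Lemma~12D4]{MS02}.

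\textbf{Enumerating the quotients.} With $\Theta=S_4\times\Omega$ in hand, any $\Gamma$ in the class corresponds to a normal subgroup $N\trianglelefteq\Theta$ such that $\Theta/N$ is again a $C$-group with the same facet and vertex figure — in particular $N$ must intersect both the facet and vertex-figure parabolics trivially (else the facet or vertex figure would shrink, contradicting the hypothesis). The claim is that the only such $N$ are the ones lying in the $S_4$ direct factor and equal to $1$, $\mathbb{Z}/2$ (the center-like normal subgroup), or $A_4$ — giving $\Gamma_4,\dots,\Gamma_1$. To see no normal subgroup meets $\Omega$ nontrivially: $\Omega=O_8^+(2){:}S_3$ has $O_8^+(2)$ simple, and I would check that the relevant parabolics restricted to $O_8^+(2)$ (recall $F$ stays maximal in $O_8^+(2)$) already intersect nontrivially any nontrivial normal subgroup of $\Omega$. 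That containment/intersection bookkeeping is the part most likely to require care.

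\textbf{Main obstacle.} The hard part will be establishing the intersection property for the full group $\Theta=S_4\times\Omega$ and its intermediate quotients without simply trusting the coset enumeration — i.e.\ giving the computer-free argument promised for Section~\ref{sec:gamma}. Lemma~\ref{lem:ip} handles the $(s,0,0,0)$ facet case cleanly, but the $(2200)$ facet forces a different route: one must show that the $[3,4,3]$-parabolic behaves well inside $[3,3,4,3]_{(2200)}$ and then that mixing with the $S_4$ factor (as a quotient of the same Coxeter group via a degenerate diagram, in the sense of the $A_2\times A_2$/$F_4$ remark after Proposition~\ref{prop:prod}) does not destroy \eqref{eq:ip}. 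Once intersection property is secured at every level, finiteness is immediate from $|\Theta|<\infty$, and the identification of $v$, $f$, and $|\Omega|$ in Theorem~\ref{th:mainmain} follows by computing indices of the facet and vertex stabilisers.
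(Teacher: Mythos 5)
Your overall architecture matches the paper's: coset enumeration for finiteness and the order of the universal group, explicit identification of that group as $S_4\times\Omega$, and a tower of quotients obtained by factoring out normal subgroups of the $S_4$ direct factor. However, the step you yourself single out as the main obstacle rests on a misreading. You claim Lemma~\ref{lem:ip} ``is not directly applicable because the facet is of type $(2200)$, not $(s,0,0,0)$.'' It \emph{is} applicable: the lemma only requires that one of the two rank-5 parabolics (after dualising, i.e.\ reversing the string diagram) be of type $(s,0,0,0)$ with $s$ an odd prime, and here the vertex figure $\{3,4,3,3\}_{(3000)}$ plays exactly that role with $s=3$, while the $(2200)$ facet is covered by the ``or $[3,4,3,3]_{(t,t,0,0)}$'' alternative. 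This is precisely how the paper secures the intersection property both in $\Theta$ (Lemma~\ref{lem:idF4}) and in the mix $\psi(\Theta)\diamond\hat{\Theta}$ (Lemma~\ref{lem:coversO}). Your proposed substitute --- that forming the mix/direct product with $S_k$ ``preserves'' \eqref{eq:ip} --- is not a theorem; the paper explicitly warns after Proposition~\ref{prop:prod} that a mix of two groups satisfying \eqref{eq:ip} may fail it, which is the entire reason Lemma~\ref{lem:ip} is needed. (You then cite \cite[Lemma~12D4]{MS02}, which \emph{is} Lemma~\ref{lem:ip}, so the argument is internally inconsistent.)

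Two further concrete problems. First, your enumeration of the quotients is wrong: you list the admissible normal subgroups of the $S_4$ factor as $1$, $\mathbb{Z}/2$, and $A_4$, but $S_4$ has trivial centre and no normal subgroup of order $2$; the correct chain is $1<V_4<A_4<S_4$, giving quotient groups $S_4,S_3,S_2,S_1$ and hence the four polytopes $\Gamma_4,\dots,\Gamma_1$ with $\Gamma_k$ a $k!$-fold cover of $\Gamma_1$. Second, you invoke Theorem~\ref{th:mainmain} to obtain the decomposition $\Theta\cong S_4\times\Omega$, but the paper derives Theorem~\ref{th:mainmain} \emph{from} Theorem~\ref{th:main}, so this is circular; the decomposition has to be produced, as in Section~\ref{sec:gamma}, by exhibiting the surjections $\psi:\Theta\to S_4$ (a genuinely non-obvious homomorphism, defined on generators and checked against the relations) and $\hat{\ }:\Theta\to\orst$ (the $24\times24$ matrices over $\mathbb{F}_2$ --- these realise the group inside $\mathit{GL}_{24}(2)$, not inside $\Fi$ as you state), combining them via Proposition~\ref{prop:prod}, and matching the resulting order against the coset enumeration.
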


According to \cite{Atl},
the sporadic simple group $\Fi$ has a presentation $Y_{332}$ on Fig.~\ref{fig:Yabc}(b)
subject to extra relations $S=f_{12}=f_{21}=1$, where
\begin{equation}\label{eq:firel}
S=(ab_1c_1ab_2c_2ab_3c_3)^{10}, \quad
f_{ij}=(ab_ib_jb_kc_ic_jd_i)^9,\text{ where }\{i,j,k\}=\{1,2,3\}.
\end{equation}
The following observation was the starting point of this project.
\begin{lemma}\label{lem:relsunderphi}
Let $\phi$ be the automorphism of the diagram $Y_{332}$ from  Fig.~\ref{fig:Yabc}(b).
Then $S^\phi$ and $S$ generate the same normal subgroup, and
$f_{12}^\phi=f_{21}$, $f_{21}^\phi=f_{12}$.
\end{lemma}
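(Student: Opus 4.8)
The plan is to understand precisely how $\phi$ acts on the generators and then track its effect on the two words $S$ and $f_{ij}$ defining the relations for $\Fi$. By definition $\phi$ is the diagram automorphism of $Y_{332}$ that fixes the central node $a$ and the arm $a,b_3,c_3$ pointwise, while swapping the two length-three arms: $b_1\leftrightarrow b_2$, $c_1\leftrightarrow c_2$, $d_1\leftrightarrow d_2$. Hence on the level of the abstract group $\phi$ is the automorphism induced by the corresponding permutation of the Coxeter generators, and I may compute $w^\phi$ for any word $w$ simply by applying this transposition letter-by-letter.

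First I would treat the easy part, the $f_{ij}$. Writing $f_{ij}=(ab_ib_jb_kc_ic_jd_i)^9$ with $\{i,j,k\}=\{1,2,3\}$, I apply $\phi$ to $f_{12}$: the letters indexed by $1$ and $2$ get swapped, the letter $b_3$ (index $k=3$) is fixed, $a$ is fixed. So $a\,b_1 b_2 b_3\, c_1 c_2\, d_1$ becomes $a\,b_2 b_1 b_3\, c_2 c_1\, d_2$, which is exactly the word $a\,b_2 b_1 b_3\, c_2 c_1\, d_2$ appearing in $f_{21}$ (where now $i=2$, $j=1$, $k=3$). Therefore $f_{12}^\phi=f_{21}$, and since $\phi$ is an involution, $f_{21}^\phi=f_{12}$. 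This is a direct symbol-manipulation check with no subtlety.

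The genuinely substantive point is the claim about $S$. Here $S=(ab_1c_1\,ab_2c_2\,ab_3c_3)^{10}$, and applying $\phi$ swaps the first two blocks, giving $S^\phi=(ab_2c_2\,ab_1c_1\,ab_3c_3)^{10}$. This is \emph{not} literally equal to $S$, nor to $S^{-1}$, so the assertion is only that $S^\phi$ and $S$ generate the same normal closure in $Y_{332}$ (equivalently, modulo the other relations, imposing $S=1$ already forces $S^\phi=1$). The key observation is that the word $ab_2c_2\,ab_1c_1\,ab_3c_3$ is a cyclic rotation of $ab_1c_1\,ab_3c_3\,ab_2c_2$... but more to the point, $S^\phi = x S^{10}/... $ — more carefully, if $u=ab_1c_1$, $v=ab_2c_2$, $t=ab_3c_3$, then $S=(uvt)^{10}$ and $S^\phi=(vut)^{10}=(vut)^{10}$. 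Now $(vut)^{10}$ is conjugate in the free group to $(utv)^{10}$ and to $(tvu)^{10}$ by cyclic shift; the plan is to exhibit $S^\phi$ as a conjugate of $S$ (or of $S^{-1}$) by an explicit element, which would immediately give that the two normal closures coincide. Concretely, $(vut)^{10}$ and $(uvt)^{10}$ are related via the involution reversing each block together with the fact that $u,v,t$ are each reversible-up-to-the-identity only if we use the defining relations; I expect one actually shows $S^\phi$ equals $w^{-1}S^{\pm1}w$ for a short word $w$, or invokes that conjugation by $ab_1c_1ab_2c_2$ cyclically permutes the blocks $u,v$ past $t$. The main obstacle, and the step requiring the most care, is exactly this: pinning down the precise conjugating element (or the precise combination of cyclic-rotation and inversion) that carries $S$ to $S^\phi$, since unlike the $f_{ij}$ case the equality is not on the nose and uses that normal closures are invariant under conjugation and inversion. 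Once that conjugator is produced the lemma follows, and this is the portion I would write out in full in the actual proof.
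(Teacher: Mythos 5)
Your treatment of the $f_{ij}$ is correct and is exactly what the paper dismisses as ``the rest of the statement is obvious.''\ The gap is in the part you yourself identify as the substantive one: you never produce the conjugator, and the mechanisms you propose cannot produce it. Writing $u=ab_1c_1$, $v=ab_2c_2$, $t=ab_3c_3$, conjugation by an initial segment of the word (e.g.\ by $uv$) only realises the \emph{cyclic} permutations of the blocks, giving $(vtu)^{10}$ and $(tuv)^{10}$; it can never give the transposition $(vut)^{10}=S^\phi$, because the cyclic group of order $3$ contains no transposition. Likewise $S^{-1}=(t^{-1}v^{-1}u^{-1})^{10}$ has its blocks in the order $t,v,u$ (each reversed), which again is not $v,u,t$ up to cyclic rotation. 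So no combination of free-group identities of the kind you invoke carries $S$ to $S^\phi$; the relation between them genuinely uses the Coxeter relations, and the sentence ``I expect one actually shows $S^\phi=w^{-1}S^{\pm1}w$ for a short word $w$'' is precisely the content of the lemma, left unproved.

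The paper's argument sidesteps conjugacy altogether. Both $S$ and $S^\phi$ lie in the parabolic $Y_{222}=\tilde E_6$, and each is a \emph{translation} in that affine Coxeter group. The normal closure of a translation $t_\lambda$ in $\tilde E_6$ consists of the translations by the $E_6$-invariant sublattice generated by $\lambda$, and $\phi$ restricts to a diagram automorphism of $Y_{222}$ fixing the affine node, hence acts on the translation lattice by the nontrivial diagram automorphism of $E_6$ (which is $-w_0$ on the root lattice). One then checks that $\lambda$ and $\lambda^\phi$ generate the same invariant sublattice (of index $3^5$, consistent with the quotient $3^5{:}E_6$ the paper cites), so the normal closures agree in $Y_{222}$ and therefore in $Y_{332}$. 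This analysis does incidentally show that $S^\phi$ is conjugate to $S^{-1}$ --- but only by a lift of the longest element $w_0$ of $E_6$, which is not a short word --- and in any case the lemma needs only equality of normal closures, not conjugacy. If you want to complete your proof, this lattice computation (or an explicit verification that $S^\phi$ lies in the normal closure of $S$ and vice versa) is the missing step.
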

\begin{proof}
Note that $S, S^\phi\in Y_{222}=\tilde{E}_6$, and it is straightforward to check that they
generate the same $E_6$-invariant sublattice in the 6-dimensional $E_6$-invariant
lattice. The rest of the statement is obvious.
\end{proof}

In view of Lemma~\ref{lem:relsunderphi}, $\Fi$ contains a quotient $\Theta=\langle c_3,b_3,a,b_1b_2,c_1c_2,d_1d_2\rangle$
of a Coxeter group with diagram $[3^2,4,3^2]$.
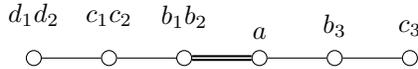
\begin{figure}[h]
\begin{tikzpicture}[every node/.style={circle,draw,inner sep=2pt}]
\node [draw] (c) [label=$d_1 d_2$] {};
\node [draw] (b) [label=$c_1 c_2$,right of=c] {} edge (c);
\node [draw] (a) [label=$b_1 b_2$, right of=b] {} edge (b);
\node [draw] (u) [label=$a$,right of=a] {}
    edge[double,thick] (a);
\node [draw] (s) [label=$b_3$, right of=u] {} edge (u);
\node [draw] (v) [label=$c_3$, right of=s] {} edge (s);
\end{tikzpicture}
\caption{Coxeter group with diagram $[3^2,4,3^2]$ from $\phi$.\label{fig:twistphi}}
\end{figure}

One immediately identifies
$\Theta_f=\langle b_3,a,b_1b_2,c_1c_2,d_1d_2\rangle <\Theta$
with a subgroup in $2^6{:}Sp_6(2)$ presented by
$Y_{331}$ with extra relations $f_{12}=f_{21}=1$,
and $\Theta_v=\langle c_3,b_3,a,b_1b_2,c_1c_2\rangle <\Theta$
with a subgroup in $3^5{:}E_6$ presented by $Y_{222}$ with an extra relation $S=1$, see
\cite[p.232-233]{Atl}.

Direct computations either by coset enumeration, or in
appropriate $E_6$- and $E_7$-invariant lattices allow one to identify $\Theta_v$
and $\Theta_f$ as groups classified by Theorem~\ref{th:rank5}.
By Lemma~\ref{lem:ip}, we have the following.
\begin{lemma}\label{lem:idF4}
The following isomorphisms hold: $\Theta_f=[3,4,3,3]_{(2200)}$, and
$\Theta_v=[3,3,4,3]_{(3000)}$. The intersection property holds in $\Theta$. \qed
\end{lemma}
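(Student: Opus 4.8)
The plan is to prove Lemma~\ref{lem:idF4} by identifying the two maximal parabolic subgroups $\Theta_v$ and $\Theta_f$ of $\Theta$ with the specific quotients of $\tilde F_4$ classified in Theorem~\ref{th:rank5}, and then invoking Lemma~\ref{lem:ip}. I would organise the argument around the two explicit presentations already isolated in the excerpt: $\Theta_v = \langle c_3,b_3,a,b_1b_2,c_1c_2\rangle$ sits inside the quotient $3^5{:}E_6$ of $Y_{222}$ subject to $S=1$, and $\Theta_f = \langle b_3,a,b_1b_2,c_1c_2,d_1d_2\rangle$ sits inside the quotient $2^6{:}Sp_6(2)$ of $Y_{331}$ subject to $f_{12}=f_{21}=1$ (both facts taken from \cite[pp.232--233]{Atl}). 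The task is thus to pin down the isomorphism type of each in the five-generator string-diagram language.

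First I would treat $\Theta_v$. The generators $c_3,b_3,a,b_1b_2,c_1c_2$ form a string $[3,3,4,3]$-diagram (Fig.~\ref{fig:twistphi} restricted to its first five nodes), so $\Theta_v$ is a quotient of $\tilde F_4$; since $\tilde F_4$ acts on the $F_4$-lattice in $\mathbb R^4$ and the finite quotients are exactly the lattice quotients $q^4{:}F_4$ or $q^2\times(2q)^2{:}F_4$ by Theorem~\ref{th:rank5}, it remains to compute which invariant sublattice is cut out by the single extra relation $S=1$. Here I would use Lemma~\ref{lem:relsunderphi}: $S\in Y_{222}=\tilde E_6$, and the relation $S=1$ forces a rank-$6$ $E_6$-invariant sublattice which, after restricting to the $\tilde F_4$ visible in the $[3,3,4,3]$-subdiagram, is the sublattice generated by the vector $(3,0,0,0)$ (one checks directly in coordinates, using $\sigma=d^{cb}$, $\tau=c^{de}$ from Fig.~\ref{fig:eF4}, that the image of $S$ under the projection to the translation lattice is $3$ times a minimal vector, i.e.\ that the relation $(a\sigma\tau\sigma)^3=1$ holds and no smaller power does). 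This gives $\Theta_v\cong[3,3,4,3]_{(3000)}$, equivalently $[3,4,3,3]_{(3000)}$ after the duality the paper uses. The analogous computation for $\Theta_f$: its generators span a $[3,3,4,3]$-string as well, $\Theta_f$ is a finite quotient of $\tilde F_4$, and the relations $f_{12}=f_{21}=1$ (again via Lemma~\ref{lem:relsunderphi}, $f_{12}^\phi=f_{21}$) cut out the sublattice generated by $(2,2,0,0)$, giving $\Theta_f\cong[3,3,4,3]_{(2200)}=[3,4,3,3]_{(2200)}$; the telltale relation to verify is $(a\sigma\tau)^4=1$. As the excerpt notes, both identifications can alternatively be done by coset enumeration, which I would run as an independent check.

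With $\Theta_v\cong[3,3,4,3]_{(3000)}$ and $\Theta_f\cong[3,4,3,3]_{(2200)}$ in hand, I would finish by applying Lemma~\ref{lem:ip} with $s=3$ (an odd prime), $t=2$: the hypotheses there are exactly that the two maximal parabolics of a finite quotient $G$ of $[3^2,4,3^2]$ are of the form $[3,3,4,3]_{(s,0,0,0)}$ and $[3,4,3,3]_{(t,t,0,0)}$ and are proper subgroups of $G$. Properness is immediate because $\Theta$ is a proper extension of each parabolic (it contains the extra node, and $\Theta\le\Fi$ is visibly larger than either $2^6{:}Sp_6(2)$-subgroup or $3^5{:}E_6$-subgroup), so Lemma~\ref{lem:ip} yields the intersection property \eqref{eq:ip} in $\Theta$, completing the proof.

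The main obstacle is the lattice bookkeeping in the middle step: one must correctly translate the word $S$ (and $f_{12}$, $f_{21}$) written in the $Y$-generators into an element of the translation lattice of the $\tilde F_4$ sitting inside, and then recognise \emph{which} of the two series $(q,0,0,0)$ versus $(q,q,0,0)$ it belongs to and with which $q$ — a sign error or a wrong choice of minimal vector would give the wrong entry of Theorem~\ref{th:rank5}. This is precisely why the paper backs up the computation with two independent Todd--Coxeter runs; in the write-up I would either carry out the coordinate calculation explicitly in a chosen basis of the $E_6$-lattice, or cite the coset enumeration, rather than leave it to the reader.
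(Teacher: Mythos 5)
Your proposal follows essentially the same route as the paper: identify $\Theta_v$ and $\Theta_f$ as the $[3,3,4,3]$- and $[3,4,3,3]$-type parabolics sitting inside the $Y_{222}$- and $Y_{331}$-quotients $3^5{:}E_6$ and $2^6{:}Sp_6(2)$ from \cite[pp.232--233]{Atl}, pin down the invariant sublattice (hence the parameter vector in Theorem~\ref{th:rank5}) by direct computation in the $E_6$- and $E_7$-invariant lattices or by coset enumeration, and then deduce the intersection property from Lemma~\ref{lem:ip} with $s=3$. This matches the paper's argument, with your write-up merely spelling out the lattice bookkeeping that the paper leaves as a ``direct computation.''
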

At this point we can already conclude that the example of $\{3,3,4,3,3\}$-polytope
provided by $\Theta$ does not appear in \cite[Table~V]{MS96}, as
the pair of vectors $\mathbf{s}=(3000)$ and $\mathbf{t}=(2200)$ determining
the vertex figure and the facet is not present there.

As $\phi$ on Fig.~\ref{fig:Yabc}(b) induces an outer automorphism of $\Fi$
and centralises $\Theta$, one concludes by inspecting the list of maximal subgroups
of $\Fi$ that $\Theta\leq\orst$, and enumeration of cosets
of $\Theta$ in $\Fi$, or analysis of maximal subgroups of $\orst$ confirm that
in fact $\Theta=\orst$, identifying it with the group of the example $\Gamma_1$
in our Theorem~\ref{th:main}.

\begin{remark}\label{rem:Y}
The problem of recognising groups presented by $Y_{332}$ with similar extra relations,
generalising ones on \cite[pp.232-233]{Atl},
has been brought to the author's attention by Alexander A. Ivanov.
Namely, one is interested in the general setting where subgroups $Y_{331}=\tilde{E}_7$
and $Y_{222}=\tilde{E}_6$ are made
finite by factoring out the corresponding finite index invariant sublattices.

It is strikingly parallel to the problem of classifying the finite quotients
of $[3,3,4,3,3]$ we are concerned with in this text.
The simplest case, where $Y_{222}$ becomes $2^6{:}E_6$, has been essentially settled
in \cite{MR704292} (one gets orthogonal groups over $\mathbb{F}_2$, in some cases
acting on its natural $\mathbb{F}_2$-module), but in general it is open.
We remark that Proposition~\ref{prop:prod} allows to construct more examples, e.g. we can get
a subgroup of $\Fi\times 2^8{:}O_8^-(2)$ by applying it to the two known examples.

We hope to explore this topic further in another publication.
\end{remark}

\begin{problem}
The question of understanding similarly constructed subgroups of the
Monster-related groups with $Y$-presentations is largely open, although
they ought to exist by arguments similar to the $Y_{332}$-case considered here.

Preliminary experiments with enumerating cosets of one of our finite $[3^2,4,3^2]$-quotients,
namely the group of $\Gamma_1$, isomorphic to $\orst$,
in $[3^2,4,3^3]$, indicates that this gives the group $F_4(2)$, which is a subgroup
of a quotient of $Y_{333}$ isomorphic to $^2\, E_6(2)$.
More precisely, the enumeration returns index 12673024, which is 4 times the index
of $\orst$ in $F_4(2)$.

We should also mention that the $[3,4,3^3]$-subgroup in this group,
which corresponds to a locally toroidal polytope of the type
considered in  \cite[Sect.~12C]{MS02},
is isomorphic to $L_4(3){:}2$, shown to be a subgroup in $F_4(2)$
by L.~Soicher \cite{Soi:Y555more} by
an argument involving a diagram automorphism similar to $\phi$.

\end{problem}

\section{Another existence proof of $\Gamma_1$, and existence of its covers}\label{sec:gamma}
Here we present another, direct, existence proof of $\Gamma_1$, and
complete the proof of Theorem~\ref{th:main} by constructing its covers.

We begin by writing down a presentation $\mathcal{P}$ for $\Theta$, the group of
the \emph{universal cover}
of the examples from Theorem~\ref{th:main}.
In our context it simply means that in $\mathcal{P}$ we do not impose any
relations that involve all the generators. As $\Theta$ is the group of the (unique)
universal polytope, we can conclude without loss in generality that $\mathcal{P}$ is
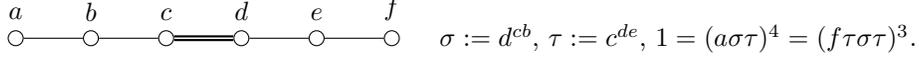
\begin{figure}[h]
\begin{tikzpicture}[every node/.style={circle,draw,inner sep=2pt}]
\node [draw] (c) [label=$a$] {};
\node [draw] (b) [label=$b$,right of=c] {} edge (c);
\node [draw] (a) [label=$c$,right of=b] {} edge (b);
\node [draw] (u) [label=$d$,right of=a] {} edge[double,thick] (a);
\node [draw] (s) [label=$e$, right of=u] {} edge (u);
\node [draw] (t) [label=$f$, right of=s] {} edge (s);
\end{tikzpicture}\quad
$\sigma:=d^{cb}$, $\tau:=c^{de}$, $1=(a\sigma\tau)^4=(f\tau\sigma\tau)^3$.
\caption{The presentation $\mathcal{P}$ for $\Theta$,  nodes labelled by generators. \label{fig:eeF4}}
\end{figure}

\begin{lemma}\label{lem:embeddingO}
There is a surjective homomorphism $\hat{ }:\Theta\to\orst$, with
its image, the group of $\Gamma_1$, satisfying the intersection property.
\end{lemma}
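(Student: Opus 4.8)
The plan is to construct the homomorphism $\hat{\ }$ explicitly by exhibiting six transvections (or, more naturally here, six reflections built from the geometry of $O_8^+(2){:}S_3$) in a small matrix group over $\mathbb{F}_2$, check that they satisfy the Coxeter relations of the diagram $[3^2,4,3^2]$ together with the two extra relations $(a\sigma\tau)^4=1$ and $(f\tau\sigma\tau)^3=1$ from Figure~\ref{fig:eeF4}, and then verify that the group they generate is all of $\orst$ and that the intersection property \eqref{eq:ip} holds. Concretely, I would start from the identification $\Theta\leq\orst$ already obtained in Section~\ref{sec:Y} via the embedding into $\Fi$ and Lemma~\ref{lem:idF4}: that gives us a candidate target group and, by restriction, candidate images of the six generators. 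The point of this section is to replace the coset enumeration in $\Fi$ by something checkable ``by hand or with any software capable of multiplying $24\times24$ matrices over $\mathbb{F}_2$'', so I would realise $\orst$ inside $\mathit{GL}_{24}(2)$ (the permutation module, or a suitable $\mathbb{F}_2$-module of dimension $24$, for $O_8^+(2){:}S_3$) and write the six generators as explicit involutions there.

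The key steps, in order: \textbf{(1)} Fix a faithful $24$-dimensional $\mathbb{F}_2$-representation of $\Omega=\orst$ and write down six involutions $\hat a,\hat b,\hat c,\hat d,\hat e,\hat f$ in it. \textbf{(2)} Check the defining relations of $\mathcal{P}$: the Coxeter relations of $[3^2,4,3^2]$ (each generator an involution, the braid/commuting relations along the string, the one double bond giving order $4$), plus $(\hat a\hat\sigma\hat\tau)^4=1$ and $(\hat f\hat\tau\hat\sigma\hat\tau)^3=1$ where $\hat\sigma:=\hat d^{\hat c\hat b}$, $\hat\tau:=\hat c^{\hat d\hat e}$; these are finitely many matrix identities over $\mathbb{F}_2$. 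This establishes that $\hat{\ }$ is a well-defined homomorphism $\Theta\to\Omega$. \textbf{(3)} Show surjectivity: verify that the six images generate $\Omega$ — e.g. by checking the subgroup they generate has the right order $2^{13}\cdot3^6\cdot5^2\cdot7$, or by identifying enough of its structure (it visibly surjects onto $S_3$ via the outer action, and the kernel of that map must contain $O_8^+(2)$, which one pins down by a generation argument or order count). \textbf{(4)} Verify the intersection property for the image: identify the two maximal parabolics $G_{\{0,\dots,4\}}$ and $G_{\{1,\dots,5\}}$ as $[3,3,4,3]_{(2200)}$ and $[3,4,3,3]_{(3000)}$ respectively — using Theorem~\ref{th:rank5} and the relations, exactly as in Lemma~\ref{lem:idF4} — note that $(3,0,0,0)$ has $s=3$ an odd prime, observe both parabolics are proper in $\Omega$ (they are the maximal subgroups $3^4{:}F_4$ and $(2^2.4^2){:}F_4$ mentioned after Theorem~\ref{th:mainmain}, of index $>1$), and invoke Lemma~\ref{lem:ip} to conclude \eqref{eq:ip} holds. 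The parabolics being proper is immediate from their orders being smaller than $|\Omega|$.

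The main obstacle will be step \textbf{(2)} combined with step \textbf{(4)}: producing the right six matrices is a matter of unwinding the $Y_{332}$-to-$[3^2,4,3^2]$ twist $\phi$ (so the generators $\hat b,\hat c,\hat f$ of the ``branch'' come from $b_3,c_3$ and the products $b_1b_2,c_1c_2,d_1d_2$ on Figure~\ref{fig:twistphi}, restricted to $\orst\leq\Fi$), and then one must be sure that the presentation $\mathcal{P}$ as drawn — in particular that the only extra relations needed beyond the Coxeter relations are the two displayed ones, and that no collapse occurs — genuinely defines $\Theta$ rather than a proper quotient; but since $\Theta$ was already shown to be the group of the \emph{universal} polytope in Section~\ref{sec:Y} (the facet and vertex figure are pinned down by Lemma~\ref{lem:idF4}, and universality forces $\mathcal{P}$ to impose no relation on all six generators), this is really just bookkeeping: I would argue that $\mathcal{P}$ is a presentation of $\Theta$ by noting that the displayed relations are exactly those forcing the facet $\{3,3,4,3\}_{(2200)}$ and vertex figure $\{3,4,3,3\}_{(3000)}$ via Theorem~\ref{th:rank5}, and then $\hat{\ }:\Theta\to\Omega$ is the composite $\Theta\hookrightarrow\Fi$ followed by projection onto the $\orst$ factor relative to $\langle\phi\rangle$ — which is visibly surjective onto $\orst$ and has the intersection property by Lemma~\ref{lem:ip}. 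The genuinely computational residue — multiplying $24\times24$ matrices over $\mathbb{F}_2$ to confirm the orders — is exactly what the excerpt advertises as the light-weight alternative to Todd--Coxeter, so I would present it as such and not grind through it.
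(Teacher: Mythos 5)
Your plan is essentially the paper's own proof: the paper realises the generators as explicit $24\times24$ matrices over $\mathbb{F}_2$ (with $\hat b,\hat c,\hat d,\hat e$ permutation matrices from the $F_4$-action on the vertices of the 24-cell and $\hat a,\hat f$ given block matrices), verifies the relations of $\mathcal{P}$ and the invariance of an explicit quadratic form $\Phi$ to get $\hat\Theta\le\orst$, proves equality either from the list of maximal subgroups of $\orst$ or from the transitive action of a parabolic on the $135$ isotropic vectors, and then quotes the intersection property from the identification of the two rank-5 parabolics. The one caution: the entire substance of the proof is the six explicit matrices, so your fallback of defining $\hat{\ }$ as the restriction of the $\Fi$-embedding would make this section circular with respect to its stated purpose of being independent of the coset enumeration in $\Fi$ --- you do need to actually write the matrices down rather than ``not grind through it.''
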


\begin{proof}
The embedding will be given by matrices in $\mathit{GL}_{24}(2)$,
with all the elements, except images $\hat{a}$ and $\hat{f}$ of $a$ and $f$, permutation matrices.
Namely, $\langle b,c,d,e\rangle=F_4$ embeds as the permutation group in its
natural action on the vertices of the 24-cell.
The latter action has an  imprimitivity system with three blocks of size 8, which
will correspond to the three natural 8-dimensional submodules for $O_8^+(2){:}2$.
We choose the generators so that the blocks are
$\{1..8\}$, $\{9..16\}$, and $\{17..24\}$, and so that the centre
of $F_4$ is the permutation $(1,2)(3,4)\dots (23,24)$. Let us slightly abuse notation and
view the following as specifications of
$24\times 24$-permutation matrices over $\mathbb{F}_2$.
\begin{align*}
\hat{b}=&\prod_{i\in\{0,8,16\}} (i+3,i+8)(i+4,i+7), &
\hat{c}=&(3,6)(4,5)(11,14)(12,13)(17,24)(18,23),\\
\hat{d}=&(5,6)\prod_{i=9}^{16} (i,i+8),&
\hat{e}=&(21,22)\prod_{i=1}^8 (i,i+8).
\end{align*}
In terms of the 24-cell, the size 8 imprimitivity block of a vertex $v$ consists of the
antipodal to $v$ vertex of the 24-cell, and the vertices antipodal to $v$ in
each of the 6 octahedra (facets of the 24-cell) on $v$. The
stabiliser of the block in $F_4$ is $\langle \hat{b},\hat{c},\hat{d},\hat{\tau}\rangle$.

Define matrices over $\mathbb{F}_2$, using $I_k$ to denote $k\times k$ identity matrix,
and $E_{ij}$ to denote the matrix with $1$ in the position $ij$ and $0$ elsewhere.
\begin{align*}
A&:=\begin{pmatrix}
1& 0& 0& 0& 0& 0& 0& 0\\
0& 1& 1& 1& 1& 1& 0& 0\\
1& 0& 1& 0& 0& 0& 0& 1\\
1& 0& 0& 1& 0& 0& 0& 1\\
1& 0& 0& 0& 1& 0& 0& 1\\
1& 0& 0& 0& 0& 1& 0& 1\\
0& 0& 1& 1& 1& 1& 1& 0\\
0& 0& 0& 0& 0& 0& 0& 1
\end{pmatrix}, &
F&:=\begin{pmatrix}
0& 0& 0& 0& 1& 1& 0& 0\\
0& 0& 0& 0& 1& 0& 0& 0\\
0& 0& 0& 0& 0& 0& 1& 1\\
0& 0& 0& 0& 0& 0& 1& 0\\
0& 1& 0& 0& 0& 0& 0& 0\\
1& 1& 0& 0& 0& 0& 0& 0\\
0& 0& 0& 1& 0& 0& 0& 0\\
0& 0& 1& 1& 0& 0& 0& 0
\end{pmatrix},\\
\hat{a}&:=I_3\otimes A, &
\hat{f}&:=\begin{pmatrix} I_8+E_{21}&0&0\\0&0&F\\0&F&0 \end{pmatrix}.
\end{align*}
Now it is routine to check that the relations in $\mathcal{P}$ hold for just defined $\hat{a}$,\dots, $\hat{f}$. 
As well, the elements $\hat{a}$,\dots, $\hat{f}$ 
leave invariant the quadratic form $\Phi:=\sum_{j=1}^{24}x_j^2+\sum_{i=1}^{12}x_{2i-1}x_{2i}$; this is immediate
for the permutation matrices $\hat{b}$,\dots, $\hat{e}$, as they commute with the centre of $\langle \hat{b},\hat{c},\hat{d},\hat{e}\rangle$,
and a direct computation for $\hat{a}$ and $\hat{f}$.

As this point we have $\hat{\Theta}\leq \orst$.
To see that the latter  is in fact equality, one can either inspect the list of the maximal subgroups of $\orst$,
or investigate the action of the stabiliser $H:=\langle \hat{a},\hat{b},\hat{c},\hat{d},\hat{\tau},\hat{f}\rangle$
of the subspace spanned by the first 8 coordinates in $\hat{\Theta}$,
and see that it acts transitively on  the 135 nonzero $\Phi$-isotropic vectors there,
by computing the orbit of the all-1 vector, and further identifying $H$ with a rank 3 permutation group $O_8^+(2){:}2$
in its natural action on isotropic vectors.

Finally, the intersection property holds, by \cite{MS96}.
\end{proof}

Let $\psi:\Theta\to S_4$ be a map defined on generators of $\Theta$, with $\psi(a):=(1,2)$, $\psi(b):=(2,3)$, $\psi(c):=(3,4)$,
and $\psi(d)=\psi(e)=\psi(f)=()$. It turns out that it extends to a homomorphism onto. (Note that there is no symmetry here: if we try
to send $a$, $b$, $c$ to the identity instead, we get a trivial group).

\begin{lemma}\label{lem:coversO}
The map $\psi$ is a homomorphism onto. Also,
there is a homomorphism from $\Theta$ onto $S_4\times \orst$, with
its image the group of $\Gamma_4$.
\end{lemma}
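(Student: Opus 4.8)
The plan is to verify the two assertions of Lemma~\ref{lem:coversO} separately, reusing the matrix realisation of $\hat\Theta$ from Lemma~\ref{lem:embeddingO} for the second part. For the claim that $\psi$ is a homomorphism onto, the first step is to check that the assignment $\psi(a)=(1,2)$, $\psi(b)=(2,3)$, $\psi(c)=(3,4)$, $\psi(d)=\psi(e)=\psi(f)=()$ respects every relation of the presentation $\mathcal{P}$ on Fig.~\ref{fig:eeF4}. The Coxeter relations are immediate: the images $\psi(a),\psi(b),\psi(c)$ are the standard Coxeter generators of $S_4\cong A_3$, and $\psi(d)=\psi(e)=\psi(f)=()$ trivially satisfy any braid relation involving them (noting that the double bond $c\,d$ maps to $(3,4)\cdot()=(3,4)$, which has order $2$, not $4$ — but this is fine, we only need the relations of $\mathcal{P}$ to \emph{hold}, not to be sharp). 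The two extra relations $(a\sigma\tau)^4=(f\tau\sigma\tau)^3=1$ require computing the images of $\sigma=d^{cb}$ and $\tau=c^{de}$ under $\psi$: since $\psi(d)=\psi(e)=()$ we get $\psi(\sigma)=\psi(d)^{\psi(cb)}=()$ and $\psi(\tau)=\psi(c)^{\psi(de)}=\psi(c)=(3,4)$, whence $\psi(a\sigma\tau)=(1,2)(3,4)$ has order $2$ (so its $4$th power is trivial) and $\psi(f\tau\sigma\tau)=()\cdot(3,4)\cdot()\cdot(3,4)=()$ has order $1$. Thus $\psi$ extends to a homomorphism, and it is onto because $\langle(1,2),(2,3),(3,4)\rangle=S_4$.

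For the second assertion, I would apply the mix construction, Proposition~\ref{prop:prod}, to the two homomorphisms $\hat{\phantom{x}}\colon\Theta\to\orst$ of Lemma~\ref{lem:embeddingO} and $\psi\colon\Theta\to S_4$: since both are quotients of $\Theta$ (which is itself a quotient of the Coxeter group $[3^2,4,3^2]$), the mix $\hat\Theta\diamond S_4=\langle(\hat a,\psi(a)),\dots,(\hat f,\psi(f))\rangle\le\orst\times S_4$ is again a quotient of $\Theta$ in which the generator $k$-th coordinate pair is the image of the $k$-th Coxeter generator. So there is certainly a homomorphism $\Theta\to\orst\times S_4$ whose image is this mix; the content of the lemma is that the image is the \emph{whole} direct product $S_4\times\orst$ — equivalently, that the kernels of $\hat{\phantom{x}}$ and $\psi$ generate $\Theta$, i.e. that the two quotients are ``coprime'' in the appropriate sense. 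Here I would argue that $\ker\psi$ is a normal subgroup of $\Theta$ whose image under $\hat{\phantom{x}}$ is all of $\orst$: indeed $\psi$ kills $d,e,f$, so $\hat\Theta=\langle\hat a,\dots,\hat f\rangle$ is generated by $\widehat{\ker\psi}$ together with the images $\hat d,\hat e,\hat f$; since $O_8^+(2)$ is simple and the normal closure of $\hat d$-type elements can be shown to exhaust it (or by directly checking that $\widehat{\ker\psi}$ already surjects onto $O_8^+(2)$ using the transitivity/rank-$3$ information established in the proof of Lemma~\ref{lem:embeddingO}), the projection of the mix to $\orst$ is surjective when restricted to $1\times(\text{something})$; combined with the obvious surjection onto $S_4$ and a Goursat-type argument (the only common quotient of $S_4$ and $\orst$ compatible with the generator-wise identification is trivial, as $\orst$ has no $S_4$ or $S_3$ quotient that could be matched), the mix is the full product. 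Concretely, the cleanest route is: exhibit an explicit element of $\ker\psi$ (e.g. a suitable word in $a,b,c$ that maps to $()$ in $S_4$) whose $\hat{\phantom{x}}$-image is a nontrivial element of $\orst$, then invoke simplicity of $O_8^+(2)$ to get $\widehat{\ker\psi}\supseteq O_8^+(2)$, then note $\hat\Theta/\widehat{\ker\psi}$ is a quotient of $S_4$ forced to coincide with the $S_3$ on top, and finally count: $|S_4\times\orst|=24\cdot g$ matches the asserted order, so the surjection $\Theta\to S_4\times\orst$ identifies the image as the group of $\Gamma_4$.

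The main obstacle I anticipate is the ``coprimality'' step — pinning down exactly why the image of the mix is the full direct product rather than a proper subdirect product. The relation-checking for $\psi$ is routine, and the matrix relations for $\hat{\phantom{x}}$ were already handled in Lemma~\ref{lem:embeddingO}; what needs care is verifying that $\widehat{\ker\psi}$ really is all of $O_8^+(2)$ (or at least that it surjects onto it), which I would do either by a short direct computation with the given $24\times24$ matrices over $\mathbb{F}_2$ — checking that a single explicit word killed by $\psi$ has nontrivial, hence (by simplicity and normality) generating, image — or by citing that the $[3,3,4,3]_{(3000)}$-parabolic $\hat\Theta_v$ lies in $\widehat{\ker\psi}$ (since $\psi$ is trivial on $\langle d,e,f\rangle\supseteq$ the relevant generators after relabelling) and is already large enough inside $O_8^+(2)$. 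Once that surjectivity is in hand, the order count $|\Theta/\ker| = 24\cdot 2^{13}\cdot3^6\cdot5^2\cdot7$ closes the argument and identifies the quotient with the group of $\Gamma_4$, the remaining covers $\Gamma_k$ for $2\le k\le3$ arising analogously by composing with the quotients $S_4\to S_k$.
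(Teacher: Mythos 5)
Your first part is fine, and your overall strategy for the second part (mix $\psi(\Theta)$ with $\hat\Theta$ via Proposition~\ref{prop:prod}, then show the mix is the full direct product by proving that the $\hat{\phantom{x}}$-image of $\ker\psi$ is all of $O_8^+(2){:}S_3$) coincides with the paper's. But the execution of the second part has genuine gaps, precisely at the step you yourself flag as the main obstacle. The Goursat shortcut you propose rests on a false premise: $O_8^+(2){:}S_3$ certainly \emph{does} have $S_3$ and $C_2$ as quotients, and these are also quotients of $S_4$, so a proper subdirect product is not excluded on abstract grounds. Your assertion that $\hat\Theta$ is ``generated by $\widehat{\ker\psi}$ together with $\hat d,\hat e,\hat f$'' is circular as written, since $d,e,f\in\ker\psi$ already, so that statement is exactly the claim $\hat\Theta=\widehat{\ker\psi}$ you are trying to prove; and the parenthetical that the $[3,4,3,3]$-parabolic lies in $\ker\psi$ is false, as $\psi$ is nontrivial on $b$ and $c$. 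The closing order count cannot rescue the argument, because the order of the group of $\Gamma_4$ is precisely what is being established.

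The paper supplies the two ingredients you leave unspecified. (i) To produce nontrivial elements of $H\cap(1\times\hat\Theta)$ it uses an exponent trick rather than an unexhibited ``suitable word'': take $\hat w\in O_8^+(2)$ of order $5$, write it as a word $w$ in the mixed generators; since the exponent of $S_4$ divides $12$, $w^{12}=\hat w^{2}$ lies in $1\times\hat\Theta$ and still has order $5$, so the nontrivial normal subgroup of $O_8^+(2)$ generated by its order-$5$ elements, i.e.\ all of $O_8^+(2)$, lies in the mix. (Beware that the obvious candidates for words in $a,b,c$ killed by $\psi$, such as $(ab)^3$, are already trivial in $\Theta$ by the Coxeter relations, so ``a short direct computation'' is less immediate than you suggest.) (ii) To climb from $O_8^+(2)$ to $O_8^+(2){:}S_3$ the paper uses that $\hat d,\hat e,\hat f$ themselves lie in $H\cap(1\times\hat\Theta)$ (because $\psi$ kills $d,e,f$) and that $O_8^+(2)$ together with $\hat d,\hat e,\hat f$ generates $\hat\Theta$---a concrete fact about the matrices of Lemma~\ref{lem:embeddingO} that must be invoked and which your write-up never establishes. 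Finally, to identify the image with the group of the polytope $\Gamma_4$ one also needs the intersection property in the mix, which the paper gets from Lemma~\ref{lem:ip} and which you do not address.
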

\begin{proof}
For the first part, observe that the Coxeter relations of $\Theta$ hold in $\psi(\Theta)$. Also,
$\psi(\sigma)=()$ and $\psi(\tau)=\psi(c)=(3,4)$. It follows that the rest of the relations hold, as well,
and we are done.

For the second part,
we construct an embedding of $\Theta$ into $G:=S_4\times \orst=\psi(\Theta)\times \hat{\Theta}$,
as prescribed by Proposition~\ref{prop:prod}.
Consider $H:=\psi(\Theta)\diamond\Theta\leq G$; recall that it is
generated by $\psi(a)\hat{a}$,\dots, $\psi(f)\hat{f}$.
Trivially, by checking the relations, $H$ is a homomorphic image of $\Theta$.
Moreover, the intersection property holds in $H$ by Lemma~\ref{lem:ip}. 

It remains to show $H=G$. Consider an element of $O_8^+(2)$ of order 5,
as a word $\hat{w}:=w(\hat{a},\dots,\hat{f})$ in our generators, and the
corresponding word $w:=w(\psi(a)\hat{a},\dots,\psi(f)\hat{f})\in H$.
Then $w=w(\psi(a),\dots,\psi(f))\hat{w}$. Thus $w^{12}=\hat{w}^2\in\hat{\Theta}$ is of order 5.
As $O:=O_8^+(2)$ is generated by such elements, and as $\hat{\Theta}$ is
generated by $O$ and $\hat{d}$, $\hat{e}$, $\hat{f}$, we have $\hat{\Theta}\leq H$.
Therefore $H=G$, as claimed.
\end{proof}

\section{The case $\mathbf{s}={(2000)}$}\label{sec:2000}
First, we prove Theorem~\ref{th:maintwo}.
Let $\Gamma=\{\{3,3,4,3\}_{(2000)},\{3,4,3,3\}\}$. Then its group $\Theta$ has presentation
as follows.
\begin{figure}[h]
\begin{tikzpicture}[every node/.style={circle,draw,inner sep=2pt}]
\node [draw] (c) [label=$a$] {};
\node [draw] (b) [label=$b$,right of=c] {} edge (c);
\node [draw] (a) [label=$c$,right of=b] {} edge (b);
\node [draw] (u) [label=$d$,right of=a] {} edge[double,thick] (a);
\node [draw] (s) [label=$e$, right of=u] {} edge (u);
\node [draw] (t) [label=$f$, right of=s] {} edge (s);
\end{tikzpicture}\quad
$\sigma:=d^{cb}$, $\tau:=c^{de}$, $1=(a\sigma\tau\sigma)^2$.
\end{figure}

Enumeration of cosets of the facet stabiliser $\langle a,b,c,d,e\rangle$ in $\Theta$
shows that $|\Theta|=2^{18}{\cdot} 3^2$. Working in the
resulting permutation representation one computes the orders of
$f\tau\sigma\tau$ and $f\tau\sigma\tau\sigma$ to be 4, thus
identifying the stabiliser of the vertex with the one corresponding to
the type $\{3,4,3,3\}_{(2200)}$, as claimed.

The polytope $\Gamma$ corresponds to
the second entry in \cite[Table 12D1]{MS02} (or \cite[Table~V]{MS96}) for $t=2$.
The quotient by a normal subgroup $2^3$ in $\Theta$ produces
the other case, $\mathbf{t}=(2000)$, the first entry in [loc.cit.] for $t=2$.
It shows that the cases with $t>2$ in [loc.cit.] in fact do not arise.
\qed

\medskip

More conceptually, one can notice that it is proved on \cite[page 463]{MS02} that
the number of vertices in $\Gamma$ equals 32. In particular, it means that the index
of the subgroup $H:=\langle b,c,d,e,f\rangle$ in $\Theta$ is 32.
Let $\pi:\Theta\to\hat{\Theta}$ be the permutation representation of $\Theta$
on the cosets of $H$, and $K$ its kernel. As $H$ is a quotient of $\tilde{F}_4$,
and $K$ is a normal subgroup there, we see that $K$ is very small. It cannot contain
a faithful 4-dimensional module for $F_4$, as the latter would admit an action of $a$
commuting with the action of $\langle d,e,f\rangle$, but still inducing a
faithful action of $\langle a,b\rangle$ (as the action of $\langle b,c\rangle$ would be faithful,
and the latter is conjugate to $\langle a,b\rangle$).
This implies that $|K|\leq 8$.

Another possibility is to carry out the computation of
a presentation for $H$ applying the Reidemeister--Schreier algorithm
(which produces a presentation of a finite index subgroup of a finitely
presented group)
cf. e.g. \cite{MR1812024}, and find that $H=[3,4,3^2]_{(2200)}$.

\subsection{Embedding of the group of $\Gamma$ in $Y$-group.}\label{subs:emb}
Consider the 2-modular
quotient $G$ of $Y_{332}$, which is isomorphic to $G:=2^8{:}O_8^-(2){:}2$, as established by R.~Griess
in \cite[pp.277-278]{MR704292}. Then $\Theta$ can be identified with its subgroup,
as shown on Figure~\ref{fig:twistphi}, under the automorphism
$\phi$ of the $Y$-diagram on Figure~\ref{fig:Yabc}(b).
Specifically, the relation forcing the $\{3,3,4,3\}_{(2000)}$-facet turns the subgroup $Y_{222}$ into
$2^6{:}U_4(2).2$, and, respectively, $Y_{331}$ into $2^7{:}Sp_6(2)$.

Adding further relation forcing the $\{3,4,3,3\}_{(2000)}$-vertex figure kills
$O_2(G)$, and one obtains a quotient of
$\{\{3,3,4,3\}_{(2000)},\{3,4,3,3\}_{(2000)}\}$ by a subgroup of order 4.

The group $G$ also features on \cite[pp.232-233]{Atl} as a quotient of
$X_{332}$, a re-labelling of $Y_{332}$, with the node $c_3$ of the latter
denoted by $a_3$, by an explicitly given relation $W$. In complete analogy with
the $\Fi$ situation, the relation $W$ affords the automorphism $\phi$, and
allows one to establish the existence of $\Gamma$ directly.

\begin{remark}
In his PhD thesis L.~Soicher \cite[Thm.A.2]{Soi:phd} proved that the group $[3^m,4,3^n]$, subject to
a relation killing the centre of the $[3,4,3]$-parabolic, is
isomorphic to $2^{mn}{:}(S_{m+1}\times S_{n+1})$.
Note that for $m=n=2$ this example has the cover
$\{\{3,3,4,3\}_{(2000)},\{3,4,3,3\}_{(2000)}\}$ from the last part of
Theorem~\ref{th:maintwo} (see also the first entry in \cite[Table V]{MS96}).

Another ``tower'' of finite $[3^m,4,3^n]$-examples with unbounded $m$ and $n$ may be constructed
from 2-modular quotients of the group $Y_{\alpha\alpha\gamma}$ by twisting with $\phi$,
generalising the observation in the beginning of Section~\ref{subs:emb}.
There the $[3^2,4,3^2]$-subgroups will correspond to
$\{\{3,3,4,3\}_{(2000)},\{3,4,3,3\}_{(2200)}\}$ from Theorem~\ref{th:maintwo}.
\end{remark}

\section{The mix construction}\label{sec:prod}
Our first application of the mix construction in Proposition~\ref{prop:prod} was Lemma \ref{lem:coversO}.
Here we present more applications of the mix construction.
The following constitutes another proof of a result in \cite[Sect.4.2]{MR2419764}.
\begin{lemma}\label{prop:L43}
The group of $\Gamma=\{\{3,3,4,3\}_{(3000)},\{3,4,3,3\}_{(3000)}\}$
is isomorphic to $(3^2\times L_4(3)).2^2$.
\end{lemma}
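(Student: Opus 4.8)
The plan is to identify the group $\Lambda$ of $\Gamma=\{\{3,3,4,3\}_{(3000)},\{3,4,3,3\}_{(3000)}\}$ by exhibiting it as a mix, in the sense of Proposition~\ref{prop:prod}, of two quotients of the universal Coxeter group $W=[3^2,4,3^2]$ whose groups we already understand. The natural candidates are the groups $\Theta_1$ and $\Theta_2$ of two polytopes from Table~\ref{tab:known} that both have facet type $\{3,3,4,3\}_{(3000)}$ or vertex-figure type $\{3,4,3,3\}_{(3000)}$: concretely, the polytope from Theorem~\ref{th:mainmain} (the $\orst$-related example, with $\mathbf{s}=(2200)$, $\mathbf{t}=(3000)$) and a partner quotient whose facets are already of type $(3000)$. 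More precisely, one sees a factor $3^2$ and a factor $L_4(3)$ in the target group, and $L_4(3).2^2$ is the group of a known locally projective polytope; the idea is that $(3^2\times L_4(3)).2^2$ is exactly $\Lambda_a\diamond\Lambda_b$ where $\Lambda_a$ has an abelian $3^2$ (accounting for the extra cyclic data forced by the two $(3000)$ parameters simultaneously) and $\Lambda_b$ is the locally projective $L_4(3).2^2$-polytope, both being quotients of $W$ affording the $[3^2,4,3^2]$ Coxeter relations.

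First I would write down the presentation $\mathcal{P}$ of $\Lambda$ as $[3^2,4,3^2]$ subject to the two extra relations $(a\sigma\tau\sigma)^3=1$ and $(f\tau\sigma\tau)^3=1$ dictated by Theorem~\ref{th:rank5} (the $(3000)$ parameter on each end corresponds, after dualising the vertex-figure side, to a relation of the form $(\cdot)^3=1$ on the appropriate Coxeter element). Next I would verify, using Theorem~\ref{th:rank5} applied to the parabolics $\Lambda_{\{0,\dots,4\}}$ and $\Lambda_{\{1,\dots,5\}}$, that these are indeed $[3,3,4,3]_{(3000)}$ and $[3,4,3,3]_{(3000)}$, each of which is $3^4{:}F_4$; and I would note that $3$ is an odd prime so Lemma~\ref{lem:ip} applies: as soon as these two parabolics are proper subgroups of $\Lambda$, the intersection property holds automatically, so $\Gamma$ is a genuine abstract polytope and $\Lambda$ is its group. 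Then I would exhibit the two homomorphisms $\Lambda\to\Lambda_a$ and $\Lambda\to\Lambda_b$ and check that the induced map $\Lambda\to\Lambda_a\diamond\Lambda_b$ is onto (using the same orbit/word trick as in the proof of Lemma~\ref{lem:coversO}: pick elements of suitable orders in one factor, raise a mixed word to a power that kills the other factor, and conclude that each factor lies in the image), and injective by a coset-count or by comparing with the universality of $\{\mathcal{P}_f,\mathcal{P}_v\}$.

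The identification of $\Lambda_a\diamond\Lambda_b$ with $(3^2\times L_4(3)).2^2$ is then a matter of recognising the two factors and the amalgamating data: $L_4(3).2^2\cong \mathrm{PGL}_4(3).2$ is the automorphism group of the polytope studied in \cite[Sect.~4.2]{MR2419764}, realised as $[3^2,4,3^2]$ acting on $\mathbb{F}_3^4$ via reflections (transvections), and the complementary $3^2$ comes from the ``extra'' invariant sublattice data that survives when one imposes $(3000)$ on both ends rather than on one. I would pin this down either by citing the structure already computed in \cite{MR2419764} and matching orders ($|(3^2\times L_4(3)).2^2| = 3^2\cdot|L_4(3)|\cdot 4$ against the product of the orders of $\Lambda_a$ and $\Lambda_b$ divided by the order of their common quotient, via Goursat/the fibre-product description of $\diamond$), or by a small explicit matrix computation over $\mathbb{F}_3$ analogous to the $\mathbb{F}_2$ computation in Lemma~\ref{lem:embeddingO}. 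The main obstacle I anticipate is the last step: cleanly determining the amalgamating quotient through which $\Lambda_a$ and $\Lambda_b$ are glued (equivalently, the precise extension data in the ``$.2^2$''), since the mix $\Lambda_a\diamond\Lambda_b$ is a subdirect, not direct, product, and getting the group to come out as exactly $(3^2\times L_4(3)).2^2$ rather than some isoclinic or smaller variant requires care; here I expect the cleanest route is simply to quote the detailed analysis of \cite[Sect.~4.2]{MR2419764} for $\Lambda_b$ and verify compatibility, rather than rederiving the extension from scratch.
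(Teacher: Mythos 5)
Your overall framework coincides with the paper's: realise the group as a mix (Proposition~\ref{prop:prod}) of two quotients of $[3^2,4,3^2]$, get the intersection property from Lemma~\ref{lem:ip}, prove generation by the word/order trick of Lemma~\ref{lem:coversO}, and fix the order by coset enumeration. The genuine gap is in the one step that carries the proof, namely the choice of the two mix factors. Your first concrete candidate --- the $\orst$ example with $\mathbf{s}=(2200)$, $\mathbf{t}=(3000)$ mixed with a partner of facet type $(3000)$ --- cannot work: the facet subgroup of a mix is the mix of the facet subgroups, so by Proposition~\ref{prop:prod3343} mixing a $(2200)$-facet with a $(3000)$-facet yields a $(6600)$-facet (this is exactly Lemma~\ref{lem:more33433}); the resulting group is therefore not a quotient of the group of $\Gamma=\{\{3,3,4,3\}_{(3000)},\{3,4,3,3\}_{(3000)}\}$ at all and cannot be used to identify it. Your corrected guess ($\Lambda_b$ the $L_4(3)$-related quotient of \cite[Sect.4.2]{MR2419764}, $\Lambda_a$ ``something carrying an abelian $3^2$'') has the right shape but never specifies $\Lambda_a$, and that is precisely the nontrivial idea.

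The paper's choice is $\Omega\cong S_3\times S_3$, the quotient of $[3^2,4,3^2]$ obtained by forcing the two generators joined by the double bond to commute --- the $A_2\times A_2$ ``quotient diagram'' alluded to after Proposition~\ref{prop:prod}. This $\Omega$ is visibly a quotient of the group of $\Gamma$; its $O_3(\Omega)\cong 3^2$ supplies the normal $3^2$, and its abelianisation $2^2$ equals that of $[3^2,4,3^2]$, which is what produces the full $.2^2$ in the mix $P:=\bigl(L_4(3){:}2_2\bigr)\diamond\Omega$. One then finds $L_4(3)\leq P$ from words of order $13$, hence $3^2\times L_4(3)\leq P$ and $|P|\geq 36\,|L_4(3)|$, and equality with the coset-enumeration count finishes the identification. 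Without this (or an equivalent) small factor your plan stalls exactly where the $3^2$ and the $2^2$ must be produced. Two smaller corrections: the relevant fact from \cite{Atl} is that $[3,3,4,3]_{(3000)}\cong 3^4{:}F_4$ is maximal in $L_4(3){:}2_2\cong \mathit{PGO}_6^+(3)$, whose split central extension by a group of order $2$ is the quotient constructed in \cite[Sect.4.2]{MR2419764} as the mod-$3$ reduction of the $6$-dimensional reflection representation --- it is a locally toroidal, not locally projective, example; and the ``amalgamating data'' you worry about is resolved not by Goursat-type bookkeeping but by the abelian-invariants observation above.
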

\begin{proof} (Sketch)
From coset enumeration, we know the order of the group in question.
The group $\hat{\Theta}:=L_4(3){:}2_2\cong \mathit{PGO}_6^+(3)$ has a maximal subgroup isomorphic to
$[3,3,4,3]_{(3000)}$, according to \cite{Atl}. As well, its central extension by the group of 
order 2, which is in fact split, is constructed in
 \cite[Sect.4.2]{MR2419764} as a quotient of the group $\Theta$ of $\Gamma$ (more
precisely, as the reduction modulo 3 of the natural 6-dimensional
representation of the Coxeter group $[3,3,4,3,3]$).

Let $\Omega$ be the quotient of $\Theta$ obtained by imposed the extra relation that
the generators (in the Coxeter diagram)
of the ``middle'' dihedral subgroup of order 8 commute. It is easy to check that
$\Omega=S_3\times S_3$. Consider the mix $P:=\hat{\Theta}\diamond\Omega$.
One sees that the $P$ contains a copy of the simple group $L_4(3)$, by considering
words in $P$ corresponding to elements of $\hat{\Theta}$ of order 13. Thus it also
contains $O_3(\Omega)=3^2$. It remains to observe that the Abelian invariants of $\Omega$
and of $[3^2,4,3^2]$ are equal to $(2,2)$, implying that the $2^2$ acting on $P$ is generated
by $P$. Finally, $P$ satisfies the intersection property by Lemma~\ref{lem:ip}.
\end{proof}

\subsection{Constructing 2-covers}
The following
will be needed for rank 6 cases.
\begin{prop}{\label{prop:prod3343}}
Let $\mathbf{s}=(s,s',0,0)\in \{(s,s,0,0), (s,0,0,0)\}$ and
$\mathbf{t}=(t,t',0,0)\in \{(t,t,0,0), (t,0,0,0)\}$.
Denote $\ell:=\lcm(s,t)$. The mix $G\diamond H$ of
$G=[3,3,4,3]_{\mathbf{s}}$ and $H=[3,3,4,3]_{\mathbf{t}}$
is $[3,3,4,3]_{(\ell,\ell,0,0)}$ in the following cases
\begin{enumerate}
\item $s'=s$, $t'=t$;
\item $s'=s$, $t'=0$, $2\ell=\lcm(2s,t)$.
\item $s'=0$, $t'=t$, $2\ell=\lcm(s,2t)$.
\end{enumerate}
Otherwise $G\diamond H$ is $[3,3,4,3]_{(\ell,0,0,0)}$.
\end{prop}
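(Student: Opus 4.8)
The plan is to work entirely inside the affine Coxeter group $\tilde F_4$ and its finite quotients classified in Theorem~\ref{th:rank5}, using the explicit ``extra relation'' description: $[3,3,4,3]_{(u,0,0,0)}$ is $\tilde F_4$ modulo $(a\sigma\tau\sigma)^u=1$, while $[3,3,4,3]_{(u,u,0,0)}$ is $\tilde F_4$ modulo $(a\sigma\tau)^{2u}=1$. Let $w_1:=a\sigma\tau\sigma$ and $w_2:=a\sigma\tau$; the key point is that in the abelianisation of the translation lattice these two words generate related invariant sublattices, and more precisely that the normal closure of $w_1^u$ and of $w_2^{2u}$ can be compared via divisibility of $u$. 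First I would recall, from the lattice description in \cite{MS96} used to prove Theorem~\ref{th:rank5}, that the invariant sublattices of finite index in the $F_4$-lattice are totally ordered by the single parameter $u$ in each of the two families $(u,0,0,0)$ and $(u,u,0,0)$, and that $(u,u,0,0)$ refines to $(u,0,0,0)$ exactly, with $(2u,0,0,0)\subseteq(u,u,0,0)\subseteq(u,0,0,0)$ as a chain of sublattices. This is the structural fact that makes the whole computation a bookkeeping exercise on lattices rather than on the full Coxeter group.

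Next I would invoke the mix construction (Proposition~\ref{prop:prod}): $G\diamond H$ is the quotient of $\tilde F_4$ by the intersection of the two defining normal subgroups $N_G$ and $N_H$ (more precisely by the normal subgroup generated by products $g_kh_k$, which for a common Coxeter quotient amounts to $N_G\cap N_H$). Since $N_G$ and $N_H$ are generated by powers of $w_1$ or $w_2$ inside the translation lattice $T\cong\mathbb Z^4$, the intersection is again an $F_4$-invariant sublattice of $T$, hence of one of the two standard shapes, and its index/parameter is determined by taking the join of the two sublattices inside $T$ — which corresponds to the least common multiple of the parameters, i.e. $\ell=\lcm(s,t)$. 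Carefully: the sublattice for $[3,3,4,3]_{(u,0,0,0)}$ is $uT$ (up to the precise normalisation), the one for $[3,3,4,3]_{(u,u,0,0)}$ is the intermediate lattice $L_u$ with $2uT\subseteq L_u\subseteq uT$. So in case (1) the join of $L_s$ and $L_t$ is $L_{\ell}$, giving $(\ell,\ell,0,0)$; in case (2) the join of $L_s$ and $tT$ is $L_s+tT$, which equals $L_\ell$ precisely when $2\ell=\lcm(2s,t)$ (so that $tT$ does not ``escape'' the intermediate lattice), and otherwise equals $\ell T=(\ell,0,0,0)$; case (3) is symmetric; and if both parameters come with the ``$0$'' second coordinate the join is visibly $\ell T$. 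I would verify the numerical conditions $2\ell=\lcm(2s,t)$ etc. by a short $2$-adic valuation argument: writing $s=2^as_0$, $t=2^bt_0$ with $s_0,t_0$ odd, the condition singles out the cases where the $2$-part of $t$ is strictly larger than that of $s$.

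I would then need to check that the resulting group genuinely satisfies the intersection property, i.e. really is the polytope $[3,3,4,3]_{(\ell,\cdot,0,0)}$ and not merely a quotient of $\tilde F_4$ with the right translation lattice; but since every $F_4$-invariant finite-index sublattice of $T$ does give a string $C$-group (this is exactly the content of Theorem~\ref{th:rank5} together with the standard fact that $p^4{:}F_4$ satisfies \eqref{eq:ip}), this is automatic. The one genuinely delicate point — and the step I expect to be the main obstacle — is getting the normalisations of the two lattices $uT$ versus $L_u$ exactly right, so that ``lcm'' really is lcm and the factor-of-$2$ conditions in (2) and (3) come out as stated rather than off by a power of $2$; this requires pinning down precisely which vector ($(u,0,0,0)$ or $(u,u,0,0)$) generates the sublattice as an $F_4$-module and how the two families interleave. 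Once the lattice normalisations are fixed, the case analysis is a routine computation with $\lcm$ and $2$-adic valuations, and the final statement follows by reading off the parameter of the join sublattice in each case.
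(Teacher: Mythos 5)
Your strategy is sound and is in substance the same as the paper's: the paper likewise reduces the problem to identifying which standard $\tilde F_4$-quotient the mix is, gets the intersection property by realising $G\diamond H$ inside $G\times H$ as an affine matrix group over a finite ring (so that it must be some $[3,3,4,3]_{\mathbf u}$ --- exactly your observation that every invariant finite-index sublattice yields a string $C$-group), and then determines $\mathbf u$. The only difference is that instead of computing the kernel sublattice directly, the paper reads off the orders of $aa'\sigma\sigma'\tau\tau'$ and $aa'\sigma\sigma'\tau\tau'\sigma\sigma'$ as lcm's of the corresponding orders in the factors (these orders are $2u,2u$ for type $(u,u,0,0)$ and $2u,u$ for type $(u,0,0,0)$), the two types being distinguished by whether the two orders coincide; this is the same sublattice data in a different guise. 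Two slips in your sketch need fixing. First, the kernel of $\tilde F_4\to G\diamond H$ is $N_G\cap N_H$, as you correctly say at the outset, so in case (2) the relevant sublattice is $L_s\cap tT$, \emph{not} the sum $L_s+tT$ that you write: the sum is the smallest sublattice containing both and would yield gcd's rather than lcm's. With the intersection the computation does come out as claimed: writing $v=\ell x$, membership in $L_s$ forces $(\ell/s)\sum_i x_i$ to be even, so $L_s\cap tT=L_\ell$ when $\ell/s$ is odd and $\ell T$ when $\ell/s$ is even. Second, your $2$-adic reformulation is reversed: the condition $2\ell=\lcm(2s,t)$ holds precisely when the $2$-part of $t$ does \emph{not} exceed that of $s$; it is the exceptional outcome $(\ell,0,0,0)$ that corresponds to the $2$-part of $t$ being strictly larger. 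Neither slip damages the argument once corrected.
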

\begin{proof}
To establish the intersection property \eqref{eq:ip}, we embed $G\diamond H$ ``diagonally''
into $G\times H$ considered as an affine matrix group over the suitable
finite ring. This shows that $G\diamond H$ is isomorphic to some
$[3,3,4,3]_{\mathbf{u}}$ and so \eqref{eq:ip} holds. It remains to compute the relation $\mathbf{u}$
implied by $\mathbf{s}$ and $\mathbf{t}$.

Let $\mathbf{s}$ and $\mathbf{t}$ have the same ``format'', i.e. either $s'=t'=0$ or
$s'=s$, $t'=t$. Then the only difference between the presentations of $G$ and $H$ is the
exponent of the extra non-Coxeter relation in Theorem~\ref{th:rank5}, and the claim follows.

It remains to deal with the case of different ``formats''.
Let $G=\langle a,b,c,d\rangle$, as in Theorem~\ref{th:rank5}.
Respectively, let $H=\langle a',b',c',d'\rangle$, and denote
 $\sigma':=d'^{c'b'}$, $\tau':=e'^{c'd'}$.

Let $s'=s$ and $t'=0$.
Thus the orders of $a\sigma\tau$ and $a\sigma\tau\sigma$ equal to $2s$,
while the orders of $a'\sigma'\tau'$ and $a'\sigma'\tau'\sigma'$ equal $2t$ and
$t$, respectively. Then the order of $aa'\sigma\sigma'\tau\tau'$ equals
$2\ell$, while the order of $aa'\sigma\sigma'\tau\tau'\sigma\sigma'$ equals
$\lcm(2s,t)$. We get the case $[3,3,4,3]_{(\ell,\ell,0,0)}$
if and only if the two latter orders are equal.
The case of $s'=0$, $t'=t$ is dealt with in the same way.
\end{proof}

This immediately implies
\begin{lemma}\label{lem:more33433}
There exist $\{\{3343\}_{\mathbf{s}},\{3433\}_{\mathbf{t}}\}$ for
$(\mathbf{s},\mathbf{t})=(3000,6600)$.
\end{lemma}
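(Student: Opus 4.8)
The plan is to realise Lemma~\ref{lem:more33433} as an immediate application of Proposition~\ref{prop:prod3343}, combined with two of the entries in Table~\ref{tab:known}. Concretely, I would take $G:=[3,3,4,3]_{(3000)}$ and $H:=[3,3,4,3]_{(2200)}$ and form their mix $G\diamond H$. These are genuine abstract polytopes of type $\{3,3,4,3\}$ (they arise as the facets of the polytopes in the last two rows of Table~\ref{tab:known}, via duality also as vertex figures of type $\{3,4,3,3\}$), so Proposition~\ref{prop:prod} applies and $G\diamond H$ is a quotient of $\tilde F_4$ satisfying the Coxeter relations.

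The first step is to invoke Proposition~\ref{prop:prod3343} with $\mathbf{s}=(3,0,0,0)$ and $\mathbf{t}=(2,2,0,0)$ — that is, in the notation of that proposition, $s=3$, $s'=0$, $t=t'=2$. We are then in case (3): we must check $2\ell=\lcm(s,2t)$, where $\ell=\lcm(s,t)=\lcm(3,2)=6$; indeed $2\ell=12=\lcm(3,4)=\lcm(s,2t)$, so the hypothesis of case (3) is met. Proposition~\ref{prop:prod3343} therefore gives $G\diamond H\cong[3,3,4,3]_{(6,6,0,0)}$, and in particular the intersection property \eqref{eq:ip} holds in it, so it is the group of an abstract polytope $\{3,3,4,3\}_{(6600)}$, and dually a polytope $\{3,4,3,3\}_{(6600)}$.

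The second step is to assemble a rank $6$ polytope of the required type. Here I would mix the two rank $6$ polytopes themselves: take the universal polytope $\mathcal{Q}_1=\{\{3,3,4,3\}_{(3000)},\{3,4,3,3\}_{(3000)}\}$ (group $(3^2\times L_4(3)).2^2$, cf. Lemma~\ref{prop:L43}) and $\mathcal{Q}_2=\{\{3,3,4,3\}_{(2200)},\{3,4,3,3\}_{(3000)}\}=\Gamma_1$ (group $\orst$, cf. Theorem~\ref{th:mainmain}), and form the mix of their groups as quotients of the Coxeter group $[3^2,4,3^2]$. By Proposition~\ref{prop:prod} this mix is again a quotient of $[3^2,4,3^2]$; its $\{0,\dots,4\}$-parabolic is the mix of $[3,3,4,3]_{(3000)}$ and $[3,3,4,3]_{(2200)}$, which by the first two steps is $[3,3,4,3]_{(6600)}$, while its $\{1,\dots,5\}$-parabolic is the mix of two copies of $[3,4,3,3]_{(3000)}$, hence $[3,4,3,3]_{(3000)}$ again. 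Both parabolics are proper subgroups (the mix surjects onto each factor, each of which has proper parabolics), so Lemma~\ref{lem:ip} yields the intersection property \eqref{eq:ip} for the mix. Hence the mix is the group of an abstract regular polytope of type $\{\{3,3,4,3\}_{(6600)},\{3,4,3,3\}_{(3000)}\}$, which is what is claimed (after reading the pair in the order $(\mathbf{s},\mathbf{t})=(3000,6600)$ as in the statement, i.e. passing to the dual).

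The only genuinely non-routine point is the verification that Lemma~\ref{lem:ip} is applicable: it requires the $\{0,\dots,4\}$-parabolic to be of the form $[3,3,4,3]_{(s,0,0,0)}$ with $s$ an odd prime, whereas here it is $[3,3,4,3]_{(6,6,0,0)}$. I would handle this either by quoting the corollaries to \cite[Lemma~12D4]{MS02} proved immediately after it (which cover the $q^2\times(2q)^2$ case and composite parameters), or, more cheaply, by noting that since $G\diamond H$ embeds diagonally into $G\times H$ as an affine matrix group over a finite ring (the argument used in the proof of Proposition~\ref{prop:prod3343}), the intersection property can be read off directly from that matrix model for the full rank $6$ mix as well, bypassing Lemma~\ref{lem:ip} entirely. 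Everything else — the arithmetic of lcm's, checking Coxeter relations in the mix — is mechanical.
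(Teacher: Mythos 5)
Your proposal is correct and is essentially the paper's own proof: the paper likewise obtains the polytope as the mix of the last two entries of Table~\ref{tab:known} (written there in the dual orientation, with both facets equal to $\{3,3,4,3\}_{(3000)}$ and vertex figures $(2200)$ and $(3000)$), computes the parameters via Proposition~\ref{prop:prod3343}, and invokes Lemma~\ref{lem:ip} for the intersection property. The one place where you go astray is the final paragraph: the worry about Lemma~\ref{lem:ip} is a non-issue, because the lemma only needs \emph{one} of the two rank-$5$ parabolics to be $[3,3,4,3]_{(s,0,0,0)}$ with $s$ an odd prime while the other may be of the form $(t,t,0,0)$ with $t\geq 2$; here the $(3000)$-parabolic plays the former role and the $(6600)$-parabolic the latter (equivalently, pass to the dual orientation, as the paper does, and the lemma applies verbatim). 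Your proposed fallback of reading off \eqref{eq:ip} from ``the affine matrix model for the full rank $6$ mix'' would not work as stated, since only the rank-$5$ groups $[3,3,4,3]_{\mathbf{u}}$ are affine quotients of $\tilde F_4$; the rank-$6$ mix admits no such model, so you should rely on Lemma~\ref{lem:ip} as above rather than on that shortcut.
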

\begin{proof}
Apply the mix construction to
$\{\{3343\}_{(3000)},\{3433\}_{(2200)}\}$ and \\
$\{\{3343\}_{(3000)},\{3433\}_{(3000)}\}$, compute
$\mathbf{s}$ and $\mathbf{t}$ using Proposition~\ref{prop:prod3343}, and note that the intersection property holds
by Lemma~\ref{lem:ip}.
\end{proof}

\begin{remark}
One can also consider other mixes of examples in
Table~\ref{tab:known} to obtain, e.g.
$(\mathbf{s},\mathbf{t})=(6000,2200)$; however,
we do not know whether the intersection property holds
in this group and other similar examples,
as Lemma~\ref{lem:ip} would not apply.
\end{remark}

\begin{remark}
Analogously one can try to construct new examples of \\
$\{\{3,4,3,3\}_{\mathbf{s}},\{4,3,3,4\}_{\mathbf{t}}\}$.
Here one needs to be able to compute in the meet-semilattice
generated by the parameters $\mathbf{t}$ of the groups $[4,3,3,4]_{\mathbf{t}}$, for
$\mathbf{t}$ in the corresponding column of \cite[Table VI]{MS96} and in the examples
from \cite{MR2610280}. We leave this to another publication.
\end{remark}

\subsection*{Acknowledgements}
The author thanks Alexander A. Ivanov for bringing the topic of $Y_{332}$-presentations, cf. Remark~\ref{rem:Y},
to his attention, Daniel Allcock, Leonard Soicher for useful suggestions and remarks, and an
anonymous referee for catching errors---in particular misuse
of the mix construction---in a previous version of this text.
The author is supported in part by
the EU Horizon 2020 research and innovation programme, grant agreement
OpenDreamKit No 676541.

\bibliography{geom}
\bibliographystyle{abbrv}
\end{document}